\newtheorem{thm}{Theorem}[section]
\newtheorem{prop}[thm]{Proposition}
\newtheorem{cor}[thm]{Corollary}
\newtheorem{lem}[thm]{Lemma}
\theoremstyle{definition}
\newtheorem{defn}[thm]{Definition}
\theoremstyle{remark}
\newtheorem{rmk}[thm]{Remark}
\newcommand\al{\alpha}
\newcommand\bt{\beta}
\newcommand\G{\Gamma}
\newcommand\g{\gamma}
\newcommand\Dt{\Delta}
\newcommand\dt{\delta}
\newcommand\e{\varepsilon}
\newcommand\z{\zeta}
\renewcommand\th{\vartheta}
\renewcommand\k{\kappa}
\newcommand\Ld{\Lambda}
\newcommand\ld{\lambda}
\newcommand\x{\xi}
\newcommand\s{\sigma}
\newcommand\ph{\varphi}
\newcommand\ch{\chi}
\newcommand\ps{\psi}
\newcommand\Om{\Omega}
\newcommand\om{\omega}
\newcommand\Ccal{{\mathcal{C}}}
\newcommand\Scal{{\mathcal{S}}}
\newcommand\Ocal{\mathcal{O}}
\newcommand\Pcal{\mathcal{P}}
\newcommand\CC{{\mathbb{C}}}
\newcommand\NN{{\mathbb{N}}}
\newcommand\QQ{{\mathbb{Q}}}
\newcommand\RR{{\mathbb{R}}}
\newcommand\ZZ{{\mathbb{Z}}}
\newcommand\SL{\mathrm{SL}}
\newcommand\GL{\mathrm{GL}}
\newcommand\pl{\mathrm{Pl}}
\newcommand\npl{\widetilde{\pl}}
\newcommand\aid{\mathfrak{a}}
\newcommand\bid{\mathfrak{b}}
\newcommand\cid{\mathfrak{c}}
\newcommand\prm{\mathfrak{p}}
\newcommand\qrm{\mathfrak{q}}
\newcommand\ad{\mathbb{A}}
\newcommand\he{{\mathcal H}}
\newcommand\supp{\mathrm{supp\,}}
\newcommand\vol{\mathrm{Vol\,}}
\newcommand\sign{\mathrm{Sign\,}}
\newcommand\Gf{\Gamma} 
\newcommand\bc{C}
\newcommand\hg[1]{{}^{#1}\!\!}
\newcommand\B{{\mathrm B}}
\newcommand\K{{\mathrm K}}
\newcommand\isdef{\mathrel{:\mskip1mu=}}
\newcommand\divides{\mathrel{|}}
\newcommand\dividesnot{\mathrel{|\mskip-6mu/}}
\newcommand\re{\operatorname{Re}}
\newcommand\lbs{{\ld_0}}
\newcommand\ctt[1]{\mathrm{N}^{#1}}
\newcommand\ctr{\ctt{\k,r;\k',r'}}
\newcommand\nctt[1]{\mathrm{\tilde N}^{#1}}
\newcommand\nctr{\nctt{\k,r;\k',r'}}
\newcommand\ectr{\mathrm{Eis}^{\k,r;\k',r'}}
\newcommand\sg{\mathbf{s}}
\newcommand\lw[1]{_{\!#1}}
\newcommand\kls[1]{S\lw{#1}}
\newcommand\GmP[1]{\G_{\!\!P^{#1\vphantom{\k'}}}}
\newcommand\GmN[1]{\G_{\!\!N^{#1\vphantom{\k'}}}}
\newcommand\norm{\mathrm{N}}
\newcommand\nrm[1]{\norm\hspace{-.6pt}(#1)}
\newcommand\heop[1]{T(#1)}
\renewcommand\={\;=\;}
\newcommand\txtfrac[2]{{\textstyle\frac{#1}{#2}}}
\renewcommand\setminus{\smallsetminus}
\newcommand\matc[4]{\left( {#1\@@atop #3}{#2\@@atop #4}\right)}
\newcommand\matr[4]{\left( {\hfill #1\@@atop\hfill #3}{\hfill
#2\@@atop\hfill #4}\right)}
\begin{document}
\title[Eigenvalues of Hecke operators] {Eigenvalues of Hecke operators
on Hilbert modular groups}
\author{Roelof W.Bruggeman, Roberto J.Miatello}

\begin{abstract}
Let $F$ be a totally real field, let $I$ be a nonzero ideal of the
ring of integers $\Ocal_F$ of~$F$, let $\G_0(I)$ be the congruence
subgroup of Hecke type of $G=\prod_{j=1}^d \SL_2(\RR)$ embedded
diagonally in $G$, and let $\ch$ be a character of $\G_0(I)$ of the
form $\ch\matc abcd = \ch(d)$, where $d\mapsto \ch(d)$ is a character
of $\Ocal_F$ modulo~$I$.

For a finite subset $P$ of prime ideals $\prm$ not dividing~$I$, we
consider the ring $\he^I$, generated by the Hecke operators
$\heop{\prm^2}$, $\prm \in P$ (see ~\S\ref{sssect-gho}) acting on
$(\G,\ch)$-automorphic forms on~$G$.

Given the cuspidal space $L^{2,\mathrm{cusp}}_\x\bigl(
\G_0(I)\backslash G,\ch\bigr)$, we let $V_\varpi$ run through an
orthogonal system of irreducible $G$-invariant subspaces
so that each $V_\varpi$ 
is invariant under~$\he^I$. For each $1\le j \le d$, let
$\ld_\varpi=(\ld_{\varpi,j})$ be the vector formed by the eigenvalues
of the Casimir operators of the $d$ factors of~$G$ on $V_\varpi$, and
for each $\prm\in P$, we take $\ld_{\varpi,\prm}\ge 0$ so that
$\ld_{\varpi,\prm}^2 -\nrm\prm$ is the eigenvalue on $V_\varpi$ of
the Hecke operator $\heop{\prm^2}$

For each family of expanding boxes $t\mapsto \Om_{t}$, as in
\eqref{Om-gen} 
in $\RR^d$, 
and fixed an interval $J_\prm$ in~$[0,\infty)$, for each $\prm\in P$,
we consider the counting function
 $$ N(\Om_{t};\,(J_\prm)_{\prm \in P}):=
 \sum_{\varpi,\, \ld_{\varpi}\in\, \Om_{t} \;:\;
\ld_{\varpi,\prm} \in J_\prm \;, \forall\prm\in P}
\left|c^r(\varpi)\right|^2.$$
Here $c^r(\varpi)$ denotes the normalized Fourier coefficient of
order~$r$ at~$\infty$ for the elements of $V_\varpi$, with
$r\in \Ocal_F' \smallsetminus \prm\,\Ocal_F'$ for every $\prm\in P$.

In the main result in this paper, Theorem~\ref{thm-afH}, we give,
under some mild conditions on the $\Om_{t}$, the asymptotic
distribution of the function $N(\Om_{t};\,(J_\prm)_{\prm \in P})$, as
$t\rightarrow \infty$. We show that at the finite places outside~$I$
the Hecke eigenvalues are equidistributed with respect to the
Sato-Tate measure, whereas at the archimedean places the eigenvalues
$\ld_\varpi$ are equidistributed with respect to the Plancherel
measure.

As a consequence,  if we fix an infinite place $l$ and
we prescribe
$\ld_{\varpi,j}\in \Om_j$
for all infinite places $j\ne l$ and$\ld_{\varpi,\prm} \in J_\prm $
for all finite places $\prm$ in~$P$
(for fixed intervals $\Om_j$ and $J_\prm$)
 and then
allow $|\lambda_{\varpi,l}|$ to grow to $\infty$, then
there are infinitely many such $\varpi$, and their positive density is
as described in Theorem~\ref{thm-afH}.
\end{abstract}

\subjclass{11F41 11F60 11F72 22E30}

\keywords{automorphic representations, Hecke operators, Hilbert
modular group, Plancherel measure, Sato-Tate measure}

\maketitle

\setcounter{tocdepth}1 \tableofcontents

\section{Introduction and discussion of main results}\label{sect-pdmr}
We work with a totally real number field $F$ of degree~$d$, the Lie
group $G=\SL_2(\RR)^d$ considered as the product of $\SL_2(F\lw j)$
for all archimedean completions $F\lw j\cong\RR$ of~$F$. The group
$\SL_2(F)$ is diagonally embedded in~$G$. We consider the congruence
subgroup $\G=\G_0(I)$ with $I$ a nonzero ideal in the ring of
integers $\Ocal_F=\Ocal$ of~$F$, a character $\ch$ of
$(\Ocal_F/I)^\ast$ inducing a character $\ch\matc abcd=\ch(d)$
of~$\G$, and a compatible central character determined by
$\x\in \{0,1\}^d$.

$L^2_\x(\G\backslash G,\ch)$ is the Hilbert space of classes of square
integrable functions transforming on the left by $\G$ according to
the character $\ch$, and transforming by the center $Z$ of $G$
according to the central character determined by~$\x$. We work with a
maximal orthogonal system $\{V_\varpi\}_\varpi$ of irreducible
subspaces in the cuspidal Hilbert subspace of
$L^2_\x(\G\backslash G,\ch)$. For each $\varpi$, there is an
eigenvalue vector $\ld_\varpi=(\ld_{\varpi,j})_j \in \RR^d$, where
$\ld_{\varpi,j}$ is the eigenvalue of the Casimir operator of the
factor $\SL_2(\RR)$ at place~$j$ in the product $G=\SL_2(\RR)^d$.

We normalize the Fourier terms of automorphic forms at the
cusp~$\infty$ as discussed in~\S\ref{sect-Four-coeff}. With this
normalization, one obtains Fourier coefficients $c^r(\varpi)$ that
are the same for all automorphic forms in~$V_\varpi$. The order $r$
of the Fourier terms runs through the inverse different $\Ocal_F'$
of~$\Ocal_F$.

In \cite{densII} (Theorem 4.5, Proposition 4.6 and Theorem 5.3) we
prove that, under some mild conditions on the family of compact sets
$t\mapsto \Om_t$ in $\RR^d$:
\begin{equation}\label{as1}
\sum_{\varpi,\, \ld_\varpi\in \Om_t} \left| c^r(\varpi)\right|^2 \=
\frac{2\sqrt{|D_F|}\,\vol(\G\backslash G)}{(2\pi)^d} \;\pl(\Om_t) \;
\bigl(1+o(1)\bigr)\qquad(t\rightarrow\infty)\,.
\end{equation}
(In \cite{densII} the factor $\vol(\G\backslash G)$ is not present. It
is due to a different normalization of measures discussed
in~\S\ref{sect-Four-coeff}.)
The factor $D_F$ is the discriminant of $F$ over~$\QQ$, the
$c^r(\varpi)$ are Fourier coefficients, and $\pl$ denotes the
Plancherel measure given by $\pl = \otimes_j \pl_{\x_j}$ on $\RR^d$,
where
\begin{align}\label{pl-def}
\pl_0(f) &\= \int_{1/4}^\infty f(\ld)
\tanh\pi\sqrt{\ld-\txtfrac14}\; d\ld
\displaybreak[0]\\
\nonumber
&\qquad\hbox{} + \sum_{b\geq 2,\, b\equiv 0 \bmod2} (b-1)\; f\left(
\txtfrac b2\left(1-\txtfrac b2\right) \right),
\displaybreak[0]
\\
\nonumber
\pl_1(f)&\= \int_{1/4}^\infty f(\ld)
\coth\pi\sqrt{\ld-\txtfrac14}\; d\ld
\displaybreak[0]\\
\nonumber
&\qquad\hbox{} + \sum_{b\geq 3,\, b\equiv 1 \bmod2} (b-1)\; f\left(
\txtfrac b2\left(1-\txtfrac b2\right) \right)
\end{align}

In this paper we work with a family of the following type:
\begin{equation}\label{Om-gen}
\Om_t \= [-t,t]^Q \times \prod_{j\in E} [A_j,B_j]\,,
\end{equation}
where $\{1,\ldots, d\}= Q \sqcup E$ is a partition of the archimedean
places of~$F$ for which $Q\neq \emptyset$. The end points $A_j$ and
$B_j$ of the fixed interval with $j\in E$ are not allowed to lie in
the set
$\bigl\{ \frac b2(1-\frac b2)\;:\; b \equiv \x_j\,,\; b>1\bigr\}$, of
discrete series eigenvalues. The variable $t$ tends to infinity. The
set $E$ can be empty, but the set $Q$ has to contain at least one
place.
\smallskip

The asymptotic formula \eqref{as1} shows that if the box
$\prod_{j\in E} [A_j,B_j]$ has positive Planche\-rel measure
$\otimes_{j\neq l} \pl_{\x_j}$, then there are infinitely many
eigenvalue vectors $\ld_\varpi$ that project to this box.

If one of the factors $[A_j,B_j]$ has zero density for $\pl_{\x_j}$,
which happens in particular if
$[A_j,B_j] \subset \bigl(0,\frac 14\bigr)$, then the asymptotic
formula has no meaning. For such a situation it is better to use the
following formulation:
\begin{equation}\label{as}
\sum_{\varpi,\, \ld_\varpi\in \Om_t} \left| c^r(\varpi)\right|^2 \=
\frac{2\sqrt{|D_F|}\,\vol(\G\backslash G)}{(2\pi)^d} \pl(\Om_t)
+o\bigl(V\lw1(\Om_t)\bigr)
\qquad(t\rightarrow\infty)\,,
\end{equation}
where the reference measure $V\lw1$ has product structure
$V\lw1= \otimes_j V\lw{1,\x_j}$ with
\begin{align}\label{V-def}
\int h\, dV\lw{1,0} &\;=\; \frac12 \int_{5/4}^\infty h(\ld)\,d\ld
+ \frac12 \int_0^{5/4} |\ld-1/4|^{-1/2}\, d\ld \\
\nonumber
&\qquad\hbox{}+ \sum_{\bt>0\,,\; \bt\equiv\frac12(1)} \bt\,
h(1/4-\bt^2)\,,\displaybreak[0]\\
\nonumber
\int h\, dV\lw{1,1} &\;=\; \frac12 \int_{5/4}^\infty h(\ld)\,d\ld
+ \frac12 \int_{1/4}^{5/4} |\ld-1/4|^{-1/2}\, d\ld \\
\nonumber
&\qquad\hbox{}+ \sum_{\bt>0\,,\; \bt\equiv0(1)} \bt \,h(1/4-\bt^2)\,.
\end{align}
This measure $V\lw1$ is positive on all sets in which eigenvalue
vectors can occur, and is comparable to $\pl$ near points $\ld$ for
which all coordinates stay a positive distance away from
$\bigl(0,\frac 14\bigr)$. So the asymptotic formula does not exclude
exceptional $\ld_{\varpi,j}$, but only limits their density.

In this paper we consider Hecke operators of the form $\heop{\prm^2}$
for primes $\prm$ not dividing~$I$. If the prime ideal
$\prm=\pi_\prm \Ocal$ is principal, then the action of the Hecke
operator $\heop{\prm^2}$ on $(\G,\ch)$-automorphic functions $f$
on~$G$
(i.e., transforming on the left according to the character $\ch$
of~$\G$) is given by
\begin{align}\label{Tp2}
f|\heop{\prm^2}(g) &\= \ch(\pi_\prm) f\left(
\matc{\pi_\prm}00{1/\pi_\prm} g \right)
+ \sum_{b\in \Ocal/\prm} f\left( \matc1{b/\pi_\prm}01 g\right)\\
\nonumber
&\qquad\hbox{} + \sum_{b\in \Ocal/\prm^2} \ch(\pi_\prm)^{-1} f \left(
\matc {1/\pi_\prm}{b/\pi_\prm}0{\pi_\prm}g\right)\,.
\end{align}
This does not depend on the choice of the generator~$\pi_\prm$. The
Hecke operator in \eqref{Tp2} preserves $(\G,\ch)$-automorphy, square
integrability and cuspidality. For non-principal $\prm \nmid I$, we
will show that there are also Hecke operators $\heop{\prm^2}$ with
similar properties (see \S\ref{sssect-gho}). They generate a
commutative algebra of symmetric bounded operators on
$L^{2,\mathrm{cusp}}_\x(\G\backslash G,\ch)$. Furthermore, the
orthogonal system $\{V_\varpi\}$ can be chosen in such a way that
each operator acts on $V_\varpi$ by multiplication by a fixed scalar,
One can show that the eigenvalue of $\heop{\prm^2}$ on $V_\varpi$ is
a real number with absolute value at most $\nrm\prm^2+\nrm\prm+1$.
(See \eqref{lbd}.)
Hence there is
\begin{equation}\label{ldprm}
\ld_{\varpi,\prm} \in \bigl[ 0,\sqrt{\nrm\prm^2+1}\bigr]
\end{equation}
such that $\ld_{\varpi,\prm}^2-\nrm\prm$ is the eigenvalue of
$\heop{\prm^2}$ in~$V_\varpi$ (see \eqref{ldpbd}). We use this
parametrization of Hecke eigenvalues since we do not wish to depart
much from the usual parametrization by Satake parameters (see
\S\ref{sect-lha}).

We note that if $\chi =1$ and if the prime $\prm \nmid I$
is of the form $\prm = \Ocal\pi_\prm$ with $\pi_\prm$ totally
positive, then one can define a Hecke operator $\heop\prm$ by the
formula
\begin{align}
(f|\heop{\prm}(g) \= f \left( \matc{\sqrt{\pi_\prm}}00{\tfrac
1{\sqrt{\pi_\prm}}} g\right)
+ \sum_{b\in \Ocal/ \prm} f \left( \matc{\frac
1{\sqrt{\pi_\prm}}
}{\frac b{\sqrt{\pi_\prm}}} 0 {\sqrt{\pi_\prm}} g \right)\,.
 \end{align}
 and $\heop\prm$ satisfies $\heop\prm^2$ = $\heop{\prm^2} + \nrm\prm$.
 In such a situation we can further arrange the system $\{V_\varpi\}$
 to be such that $\heop\prm$ has eigenvalue $\ld_{\varpi,\prm}$ or
 $-\ld_{\varpi,\prm}$ on $V_\varpi$, with $\ld_{\varpi,\prm}\ge 0$ as
 chosen above. In general, one cannot define $\heop\prm$ for all prime
 ideals~$\prm$.
 \smallskip

We now define for each prime ideal $\prm \nmid I$ 
the measure $\Phi_\prm$ on $\RR$ by
\begin{equation}\label{Phi-def}
\Phi_\prm (f) \= \frac1{\pi\; \nrm\prm}\int_0^{2\sqrt{\nrm\prm}}
f(\ld)\, \sqrt{4\;\nrm\prm - \ld^2}\, d\ld\,.
\end{equation}
By writing this as
\begin{equation}\label{ST}
\Phi_\prm(f) \= \frac 2\pi \int_0^{\pi/2} f\bigl(2\sqrt{\nrm\prm}\,
\cos\th\bigr)
\sin^2\th\, d\th
\end{equation}
we see that $\Phi_\prm$ can be viewed as the Sato-Tate measure on even
functions~$f$.
\smallskip

The main goal of this paper is to prove the following distribution
result:
\begin{thm}\label{thm-afH} Let $t\mapsto \Om_t$ be a family of compact
sets in $\RR^d$ as in \eqref{Om-gen} and let $P$ be a finite set of
prime ideals in $\prm \in \Ocal_F$, $\prm \nmid I$. For each
$\prm\in P$ let $J_\prm$ be an interval in~$[0,\infty)$. Then for any
$r\in \Ocal_F'$ such that $r\not\in \prm\,\Ocal_F'$ for every
$\prm\in P$, we have:
\begin{align}\label{afH}
&\sum_{\varpi,\, \ld_\varpi\in \Om_t \;:\; \ld_{\varpi,\prm} \in
J_\prm \;, \forall\prm\in P} \left|c^r(\varpi)\right|^2 \\
\nonumber
&\qquad\= \frac{2\sqrt{|D_F|}\,\vol(\G\backslash G)}{(2\pi)^d} \,
\biggl( \pl(\Om_t)
\prod_{\prm\in P} \Phi_\prm(J_\prm) + o\bigl(V\lw1(\Om_t)\bigr)
\biggr)\,.
\end{align}
\end{thm}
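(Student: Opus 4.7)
The strategy is the classical Sarnak--Serre equidistribution argument: expand the Sato--Tate indicators $\mathbf{1}_{J_\prm}$ in Chebyshev polynomials of the second kind, use Hecke multiplicativity of the normalized Fourier coefficients to convert Hecke-weighted sums into off-diagonal sums of the form $\sum \overline{c^r(\varpi)}\,c^{r'}(\varpi)$, and reduce the problem to an off-diagonal analogue of the basic asymptotic~\eqref{as}. Concretely, write $\ld_{\varpi,\prm} = 2\sqrt{\nrm\prm}\,\cos\th_{\varpi,\prm}$ with $\th_{\varpi,\prm}\in[0,\pi/2]$; by the Satake parametrization and Hecke multiplicativity, the eigenvalue of $\heop{\prm^{2n}}$ on $V_\varpi$ is $\nrm\prm^n\, U_{2n}(\cos\th_{\varpi,\prm})$, where $U_m$ is the Chebyshev polynomial of the second kind. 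From \eqref{ST} the system $\{U_{2n}\}_{n\geq0}$ is orthonormal in $L^2(\Phi_\prm)$, and its span consists of the even polynomials in $\ld$; by Stone--Weierstrass every continuous function on $[0,2\sqrt{\nrm\prm}]$ is uniformly approximable by finite linear combinations of these.

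Because $r\notin\prm\,\Ocal_F'$ for each $\prm\in P$, the Hecke relation for the normalized Fourier coefficients gives $c^{r\prm^{2n}}(\varpi) = \ld_{\varpi,\prm^{2n}}\, c^r(\varpi)$, where the index $r\prm^{2n}\in\Ocal_F'$ is unambiguously defined through the normalization of \S\ref{sect-Four-coeff}; equivalently,
\begin{equation*}
U_{2n}(\cos\th_{\varpi,\prm})\; |c^r(\varpi)|^2 \= \nrm\prm^{-n}\;\overline{c^r(\varpi)}\;c^{r\prm^{2n}}(\varpi),
\end{equation*}
and the analogous factorized identity holds for products over $\prm\in P$ with shifted index $r\prod_\prm\prm^{2n_\prm}$. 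This reduces the theorem to the off-diagonal asymptotic
\begin{equation*}
\sum_{\varpi,\,\ld_\varpi\in\Om_t} \overline{c^r(\varpi)}\;c^{r'}(\varpi) \= \dt_{r,r'}\,\frac{2\sqrt{|D_F|}\,\vol(\G\backslash G)}{(2\pi)^d}\, \pl(\Om_t) + o\bigl(V\lw1(\Om_t)\bigr),
\end{equation*}
where $\dt_{r,r'}$ equals $1$ when $r=r'$ (with the sign matching required by the central-character data $\x$) and $0$ otherwise. This follows from the same Kuznetsov-type sum formula underlying~\eqref{as} in \cite{densII}: the diagonal main term arises only from matched Fourier indices, while the Kloosterman and Eisenstein contributions remain $o\bigl(V\lw1(\Om_t)\bigr)$. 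Since $\prm\nmid r$ for $\prm\in P$, the shifted ideal $r\prod_\prm\prm^{2n_\prm}$ coincides with $r$ only when every $n_\prm=0$, so only the constant Chebyshev coefficient contributes to the main term.

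To replace polynomials by the indicator $\prod_\prm\mathbf{1}_{J_\prm}$, fix $\e>0$ and choose even polynomials $P^\pm_\prm$ on $[0,2\sqrt{\nrm\prm}]$ with $P^-_\prm\leq\mathbf{1}_{J_\prm}\leq P^+_\prm$ and $\Phi_\prm(P^+_\prm-P^-_\prm)<\e$; this is possible since $\Phi_\prm$ is absolutely continuous. Each $P^\pm_\prm$ has a finite expansion in $\{U_{2n}\}$, and applying the preceding reduction term by term yields
\begin{equation*}
\sum_{\varpi,\,\ld_\varpi\in\Om_t}\;\prod_{\prm\in P} P^\pm_\prm(\ld_{\varpi,\prm})\,|c^r(\varpi)|^2 \= \frac{2\sqrt{|D_F|}\,\vol(\G\backslash G)}{(2\pi)^d}\,\pl(\Om_t)\prod_{\prm\in P} \Phi_\prm(P^\pm_\prm) + o\bigl(V\lw1(\Om_t)\bigr).
\end{equation*}
Sending $t\to\infty$ with the polynomials fixed and then $\e\to 0$ closes the sandwich, provided $\pl(\Om_t)=O\bigl(V\lw1(\Om_t)\bigr)$, which follows from the product structure of $\Om_t$ in~\eqref{Om-gen} together with the pointwise comparison of $\pl$ with $V\lw1$ away from $\bigl(0,\tfrac14\bigr)$. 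The principal obstacle is establishing the off-diagonal asymptotic above with the correct main-term identification; once this is in hand, the Chebyshev/polynomial approximation argument is routine, and only finitely many shifted indices $r\prod_\prm\prm^{2n_\prm}$ arise for each fixed polynomial approximation, so no uniformity in $r'$ is required.
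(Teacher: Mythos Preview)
Your overall architecture---reduce to even polynomials in the Hecke parameters, convert Hecke weights into off-diagonal products of Fourier coefficients, invoke a Kuznetsov-type asymptotic, then sandwich---is exactly the paper's route. The gap is in the conversion step. You write the Hecke relation as
\[
U_{2n}(\cos\th_{\varpi,\prm})\,|c^r(\varpi)|^2 \;=\; \nrm\prm^{-n}\,\overline{c^r(\varpi)}\,c^{r\prm^{2n}}(\varpi),
\]
with ``$r\prm^{2n}\in\Ocal_F'$''. But $\prm$ is an ideal, not an element: when $\prm$ is not principal there is no such element of $\Ocal_F'$, and no normalization in \S\ref{sect-Four-coeff} manufactures one. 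The action of $\heop{\aid^2}$ on the Fourier expansion at $\infty$ does not stay at the cusp $\infty$ in general; Proposition~\ref{prop-Fourterm} and Theorem~\ref{thm-ew-pi} show that $\ch_\varpi(\heop{\aid^2})\,c^{\infty,r}(\varpi)$ is a finite linear combination of coefficients $c^{\k(\bid),\,r/a(\bid)^2}(\varpi)$ at cusps $\k(\bid)\in\Pcal$ determined by the ideal classes of the divisors $\bid\mid\aid^2$. Only when $\aid\bid^{-1}$ is principal does $\k(\bid)=\infty$.

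Consequently the off-diagonal input you need is not the same-cusp statement $\sum_\varpi \overline{c^r(\varpi)}\,c^{r'}(\varpi)$ with $r,r'\in\Ocal'$, but the two-cusp asymptotic of Theorem~\ref{thm-af'}, whose main term carries the factor $\dt_{\k,\k'}\,\dt_\k(r,r')$. With that in hand, the only surviving diagonal contribution in the sum over $\bid\mid\aid^2$ is $\bid=\aid$ (Lemma~\ref{lem-fbi} and \S\ref{sss-dt}), which is what forces $\Phi_\prm(S_{\prm,2k})=\dt_{k,0}$ and hence the Sato--Tate measure. So your ``principal obstacle'' is correctly identified, but it is genuinely harder than you indicate: it requires the generalized sum formula of Theorem~\ref{thm-sf} at distinct cusps, not merely the formula underlying~\eqref{as}. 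A secondary issue: the support of $\ld_{\varpi,\prm}$ is $[0,1+\nrm\prm]$ by~\eqref{ldpbd}, not $[0,2\sqrt{\nrm\prm}]$, so your polynomial sandwich must be set up on the larger interval (as in \S\ref{sect-extas}); otherwise the inequalities $P^-_\prm\le \mathbf 1_{J_\prm}\le P^+_\prm$ do not control the possibly exceptional eigenvalues.
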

This shows that in the Hilbert modular case Hecke eigenvalues are
equidistributed with respect to the Sato-Tate measure. See
p.~106--107 of~\cite{Ta}. In the case $F=\QQ$ and $I=\ZZ$ the result
in Theorem~\ref{thm-afH} is a consequence of Proposition 4.10
in~\cite{Br78}.
(In this case, since $d=1$, the set $E$ has to be empty.)

The theorem stays true for more general families $t\mapsto \Om_t$, as
discussed in~\S\ref{sect-cond} in the appendix.

\begin{cor}Let $E$ be a set of archimedean places, with $0\leq\#E<d$,
and let $S$ be a finite set of finite places outside~$I$. Suppose
that $I_v\subset\RR$ is a bounded interval for each $v\in E\cup S$.
Suppose that the $I_j$ with $j\in E$ satisfy the condition on the end
points mentioned below~\eqref{Om-gen}. If 
\[\prod_{j\in E} \pl_{\x_j}(J_j)\; \prod_{\prm\in S, \text{
 finite}}\Phi_\prm(J_\prm)>0 \]
then there are infinitely many $\varpi$ such that
$\ld_{\varpi,v}\in J_v$ for all~$v\in S$.
\end{cor}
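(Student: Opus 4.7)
The plan is to deduce the corollary from Theorem~\ref{thm-afH} applied to a well-chosen family $\Om_t$. Take the partition $\{1,\ldots,d\}=Q\sqcup E$ given by $E$ itself and its complement $Q$; since $\#E<d$ we have $Q\neq\emptyset$, as required by~\eqref{Om-gen}. For $j\in E$ set $[A_j,B_j]\isdef J_j$, which by hypothesis avoids the discrete series eigenvalues; take $P\isdef S$ with the given intervals $J_\prm$; and let $\Om_t\isdef[-t,t]^Q\times\prod_{j\in E}J_j$. An element $r\in\Ocal_F'$ with $r\not\in\prm\,\Ocal_F'$ for every $\prm\in S$ exists, since $\Ocal_F'$ is not a finite union of the proper submodules $\prm\,\Ocal_F'$.

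Next I would analyse the right-hand side of~\eqref{afH}. By the product structure of $\pl$,
\[
 \pl(\Om_t)\;=\;\prod_{j\in Q}\pl_{\x_j}\bigl([-t,t]\bigr)\cdot\prod_{j\in E}\pl_{\x_j}(J_j),
\]
and inspection of~\eqref{pl-def} shows $\pl_{\x_j}([-t,t])$ grows linearly in $t$, while $\prod_{j\in E}\pl_{\x_j}(J_j)>0$ by hypothesis; hence $\pl(\Om_t)\to+\infty$. A parallel inspection of~\eqref{V-def} shows $V\lw{1,\x_j}([-t,t])$ is also linear in $t$, and the fixed bounded intervals $J_j$ contribute finite factors to $V\lw1(\Om_t)$ (the singularity at $\ld=1/4$ is integrable and the discrete-series sum is finite), so $V\lw1(\Om_t)=O(\pl(\Om_t))$; consequently the error term $o(V\lw1(\Om_t))$ in~\eqref{afH} is absorbed into the main term. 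Combined with the positivity of $\prod_{\prm\in S}\Phi_\prm(J_\prm)$, this shows that the right-hand side of~\eqref{afH} diverges as $t\to\infty$.

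To conclude, if only finitely many $\varpi$ satisfied $\ld_{\varpi,j}\in J_j$ for all $j\in E$ and $\ld_{\varpi,\prm}\in J_\prm$ for all $\prm\in S$, then for $t$ large enough every such $\varpi$ would automatically satisfy $\ld_{\varpi,j}\in[-t,t]$ for $j\in Q$, so the left-hand side of~\eqref{afH} would stabilise at a finite value, contradicting the divergence just established. Hence there must be infinitely many such~$\varpi$. The only mildly nontrivial step is the comparison $V\lw1(\Om_t)=O(\pl(\Om_t))$, which relies precisely on the positivity hypothesis to ensure that the exceptional region near $\ld=1/4$ in the $E$-factors of $V\lw1$ cannot dominate the main term.
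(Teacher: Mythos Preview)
Your proof is correct and is precisely the argument the paper intends. The paper states the corollary without proof, treating it as an immediate consequence of Theorem~\ref{thm-afH}; your write-up spells out the natural deduction (choose $\Om_t$ as in~\eqref{Om-gen} with the given $E$, observe that the positivity hypothesis forces $\pl(\Om_t)\asymp V\lw1(\Om_t)\to\infty$, and contrast with a bounded left-hand side). One incidental remark: the existence of $r\in\Ocal_F'\setminus\bigcup_{\prm\in S}\prm\,\Ocal_F'$, which you justify by the finite-union-of-proper-submodules argument, is recorded in the paper as a separate lemma just before Proposition~\ref{prop-asfp}, proved via the Chinese Remainder Theorem; either route is fine.
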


The \emph{Ra\-ma\-nujan-Petersson} conjecture states that the
eigenvalues should satisfy
\begin{equation}\label{RP}
 \ld_{\varpi,j} \in (-\infty,0] \cup[1/4,\infty),\; \forall\,
 j\;,\quad \ld_{\varpi,\prm} \in [0,\sqrt{2\;\nrm\prm}],\; \forall\,
 {\prm\nmid I}\;.
\end{equation}
In the language of representation theory this means that local factors
$\varpi_v$ of $\varpi$ cannot be in the complementary series for any
place $v$ of~$F$ outside~$I$. Theorem \ref{thm-afH} gives support to
this conjecture: If $v=j$ is an archimedean place, then we take
$E=\{j\}$ and $[A_j,B_j]\subset \bigl(0,\frac14\bigr)$. If $v=\prm$
is a finite place not dividing~$I$, we take $S=\{\prm\}$ and
$J_\prm\subset\bigl(2\sqrt{\nrm\prm},\infty\bigr)$. We conclude from
the theorem that exceptional eigenvalues such that
$\ld_{\varpi,j} \in [A_j,B_j]$ or $\ld_{\varpi,\prm}\in J_\prm$ are
relatively scarce: the sum in the left hand side of~\eqref{afH} is
$o\bigl(V_1(\Om_t) \bigr)$ as $t\rightarrow\infty$. The theorem gives
more: the $\ld_{\varpi,v}$ are distributed according to the
Plancherel measure if $v$ is an archimedean place, and according to
the Sato-Tate distribution if $v$ is a finite place outside~$I$.
\medskip

The paper is organized as follows. In \S\ref{sect-prelims} we
introduce some notations and recall some facts on automorphic forms.
To handle Hecke operators it is convenient to work in an adelic
context recalled in~\S\ref{sect-afag}. In \S\ref{sect-Ha} we discuss
the Hecke algebra $\he^I$ and the relation of its eigenvalues to
Fourier coefficients. In \S\ref{sect-ad} we give the proof of the
main result, Theorem~\ref{thm-afH}. To this end, we need to
generalize the asymptotic result \eqref{as} to a sum in which
$|c^r(\varpi)|^2$ is replaced by a product of Fourier coefficients of
possibly different order and at possibly different cusps. This
generalized asymptotic result is Theorem~\ref{thm-af'}, proved by
using the sum formula given in Theorem~\ref{thm-sf}. We have included
this auxiliary material in two Appendices to avoid interrupting the
flow of proof of the main result.

\section{Preliminaries} \label{sect-prelims}
In this section we will introduce some notations and recall known
facts on automorphic forms and Hecke operators in our context.

\subsection{Notations}\label{sect-not} For any ring $R$ we denote:
\begin{equation} n(x) \= \matc 1x01 \text{ for }x\in R\,,\qquad h(t)\=
\matc t00{t^{-1}}\text{ for }t\in R^\ast\,.\\
\end{equation}
In the case $R=\RR^d$ we also use
\begin{align} a(y)&\= \left( \matc {\sqrt
{y_1}}00{1/\!\sqrt{y_1}},\ldots,\matc {\sqrt
{y_d}}00{1/\!\sqrt{y_d}}\right) \qquad \text{ for }y\in
(0,\infty)^d\\\nonumber
\\
\nonumber
k(\th)&\= \left( \matr{\cos\th_1}{\sin\th_1}{-\sin
\th_1}{\cos\th_1},\ldots, \matr{\cos\th_d}{\sin\th_d}{-\sin
\th_d}{\cos\th_d}\right) \qquad \text{ for }\th\in \RR^d
\end{align}
and write
\begin{align}N&\=\bigl\{ n(x)\;:\; x\in \RR^d\bigr\}\,,\qquad A\=
\bigl\{ a(y)\;:\; y\in
(0,\infty)^d\bigr\}\,,\\
\nonumber
K&\=\bigl\{ k(\th)\;:\; \th\in \RR^d\bigr\}\,.\end{align}
The center $Z$ of~$G$ is the set of $h(\z)$ with
$\z\in \{1,-1\}^d\subset\RR^d$.
\smallskip

The function $S:\RR^d\rightarrow\RR$ given by $S(x)=\sum_j x_j$
extends $\mathrm{Tr}_{F/\QQ}:F\rightarrow\QQ$.

\subsection{Cusps} As in \S2.1.4 of \cite{BM9} we choose a set $\Pcal$
of representatives of the $\G$-equivalence classes of cusps. We take
$\infty$ as representative of $\G\,\infty$. For each $\k\in \Pcal$ we
choose $g_\k\in \SL_2(F)$ such that $\k=g_\k\,\infty$. For
$\k=\infty$ we can and do take $g_\infty=1$. We use the notations
$N^\k=g_\k N g_\k^{-1}$ and $P^\k=g_\k NAZ g_\k^{-1}$, and we put
$\GmP \k \isdef \G \cap P^\k$ and $ \GmN\k \isdef \G \cap N^\k$. So
$\GmP\k$ equals the subgroup of $\G$ fixing~$\k$, and has $\GmN\k $
as a normal commutative subgroup of the form
$\GmN\k = \bigl\{ g_\k n(\x)g_\k^{-1}\; \x\in M_\k\bigr\}$ for some
fractional ideal $M_\k$ of~$F$, which is a lattice in $\RR^d$, under
the embedding of $F$ in $\RR^d$. There is the dual lattice $M_\k'$
consisting of all $r\in \RR^d$ such that $S(rx)\in \ZZ$ for all
$x\in M_\k$. So $M_\k'$ is the fractional ideal $\Ocal'\, M_\k^{-1}$,
where $\Ocal'$ is the inverse of the different ideal. Note that
$M_\k$ and $M_\k'$ depend on the choice of~$g_\k$, but $\GmP\k$ and
$\GmN\k$ do not. For the cusp $\infty$ we have taken $g_\infty=1$, so
$M_\infty=\Ocal$ and $M_\infty'=\Ocal'$.

$\Pcal\lw\ch$ is the subset of $\k\in \Pcal$ for which the fixed
character $\ch$ of $\G$ is trivial on $\GmN\k$. For
$\k\in \Pcal\lw\ch$, the dual lattice $M_\k'$ describes the
characters of $g_\k N g_\k^{-1}$ that are trivial on~$\GmN\k$. For
general $\k\in \Pcal$ we put
\begin{equation}
\label{Mtk-def}
\tilde M_\k' \isdef \bigl\{ r \in \RR^d\;:\; \ch(g_\k n(x)
g_\k^{-1} ) \= e^{2\pi i S(r x)}\text{ for every }x\in M_\k\bigr\}\,.
\end{equation}
Thus, $\tilde M_\k'$ is a shift of $M_\k'$ inside~$\RR^d$, and
$\k\in \Pcal_\ch$ if and only if $0\in \tilde M_\k'$.

\subsection{Automorphic functions and Fourier terms}
By a $(\G,\ch)$-au\-to\-mor\-phic function on~$G$ we mean any function
on~$G$ that satisfies $f(\g g) = \ch(\g) f(g)$ for all $\g\in \G$ and
$g\in G$. We call a $(\G,\ch)$-automorphic function an
\emph{automorphic form} if $f$ is an eigenfunction of the $d$ Casimir
operators of the factors of $G=\SL_2(\RR)^d$, and if it has a
\emph{weight} $q\in \ZZ^d$, {\sl i.e.},
$f(gk(\th)) = f(g)e^{iS(q\th)}$ for any $\th \in \RR^d$, where
$S(q,\th)=\sum_j q_j\th_j$. All automorphic forms on~$G$ are
real-analytic functions.

Let $f$ be a continuous $(\G,\ch)$-automorphic function on~$G$. For
each $\k\in \Pcal$ the function $x\mapsto f\bigl( g_\k n(x) g )$ on
$\RR^d$ transforms according to a character
$\x\mapsto e^{2\pi i S(r\x)}$ of $M_\k$ for some $r\in \tilde M_\k'$.
So it has an absolutely convergent Fourier expansion
\begin{align}
f(g_\k g) &\= \sum_{r\in \tilde M_\k'}
F_{\k,r}f(g)\,,\displaybreak[0]\\
\nonumber
F_{\k,r}(g) &\= \frac1{\vol( \RR^d/M_\k)} \int_{\RR^d/ M_\k} e^{-2\pi
i S(rx)} f(g_\k n(x)
g)\, dx\,.
\end{align}

In \cite{BM9} and \cite{densII} we considered only the Fourier
expansion at the cusp~$\infty$. For the relation with Hecke operators
we need to consider in this paper Fourier expansions at other cusps
as well.

We call an $(\G,\ch)$-automorphic function \emph{cuspidal} is all its
Fourier terms of order zero vanish. Sin $0\in \tilde M_\k'$ only if
$\k\in \Pcal_\ch$, the function $f$ is cuspidal if $F_{\k,0}f=0$ for
all $\k\in \Pcal_\ch$.

\subsection{Automorphic representations}\label{sect-autrepr}
Let $L^2(\G_0(I)\backslash G,\ch)$ denote the Hilbert space of classes
of functions transforming according to $\ch$ i.e.
$f(\g g) = \ch(\g)f(g)$ for any $\g\in \G_0(I)$ and $g \in G$. The
group $G$ acts unitarily on this Hilbert space by right translation.
This space is split up according to central characters, indicated by
$\x\in \{0,1\}^d$. By $L^2_\x(\G_0(I)\backslash G,\ch)$ we mean the
subspace on which the center acts by
\[ \left(\matr{\z_1}00{\z_1},\ldots,\matc{\z_d}00{\z_d} \right)
\mapsto \prod_j \z_j^{\x_j},\] where $\z_j\in \{1,-1\}$. This subspace
can be non-zero only if the following compatibility condition holds:
\begin{equation}\label{ch-x-comp}
\ch(-1) = \prod_j (-1)^{\x_j}.
\end{equation}
We assume this throughout this paper.

There is an orthogonal decomposition
\begin{equation}\label{L2decomp}
L^2_\x(\G_0(I)\backslash G,\ch) =
L^{2,\mathrm{cont}}_\x(\G_0(I)\backslash G,\ch) \oplus
L^{2,\mathrm{discr}}_\x(\G_0(I)\backslash G,\ch).
\end{equation}
The $G$-invariant subspace
$L^{2,\mathrm{cont}}_\x(\G_0(I)\backslash G,\ch)$ can be described by
integrals of Eisenstein series and the orthogonal complement
$L^{2,\mathrm{discr}}_\x(\G_0(I)\backslash G,\ch)$ is a direct sum of
closed irreducible $G$-invariant subspaces. If $\ch=1$, the constant
functions form an invariant subspace. All other irreducible invariant
subspaces have infinite dimension. They are cuspidal and span the
space $L_\x^{2,\mathrm {cusp}}(\G_0(I)\backslash G,\ch)$, the
orthogonal complement of the constant functions in
$L^{2,\mathrm{discr}}_\x(\G_0(I)\backslash G,\ch)$.

We fix a maximal orthogonal system $\left\{ V_\varpi \right\}_\varpi$
of irreducible subspaces in the Hilbert space
$L^{2,\mathrm{cusp}}_\x(\G_0(I)\backslash G,\ch)$. This system is
 unique if all $\varpi$ are inequivalent. In general, there might be
multiplicities, due to oldforms.
\medskip

Each irreducible automorphic representation $\varpi$ of $G=
\prod_j \SL_2(\RR)$ is the tensor product $\otimes_j \varpi_j$ of
irreducible representations of $\SL_2(\RR)$. Here and in the sequel,
$j$ is supposed to run over the $d$ archimedean places of~$F$.

The factor $\varpi_j$ can (almost) be characterized by the eigenvalue
$\ld_{\varpi,j}$ of the Ca\-si\-mir operator of $\SL_2(\RR)$, and the
central character, which is indicated by $\x_j$.

The eigenvalues $\ld_{\varpi,j}$ are known to be in the following
subsets of~$\RR$ depending on the central character~$\x_j$:
\begin{equation}
\begin{aligned}
 \bigl\{ \txtfrac b2-\txtfrac {b^2}4 \;:\; b\geq 2\text{ even} \bigr\}
&\;\cup\; \bigl[ \lbs,\infty\bigr)&\quad&\text{if }\x_j=0\,,\\
 \bigl\{ \txtfrac b2-\txtfrac {b^2}4 \;:\; b\geq 3\text{ odd} \bigr\}
&\;\cup\; \bigl[\txtfrac14,\infty\bigr)& &\text{if }\x_j=1\,,
\end{aligned}
\end{equation}
where $\lbs\in\bigl( 0,\frac14\bigr]$. By a conjecture due to Selberg
in the spherical case, it is expected that one can take
$\lbs=\frac14$. The best result at present is
$\frac14 \geq \lbs \geq \frac14 - \bigl(\frac 19\bigr)^2$, due to
Kim-Shahidi and Kim-Shahidi-Sarnak. See \cite{KS1}. We call
$\ld_{\varpi} = \left( \ld_{\varpi,j} \right) \in \RR^d$ the
\emph{eigenvalue vector} of the representation $\varpi$. As discussed
in \S1.1 of~\cite{densII}, spectral theory shows that the set of
eigenvalue vectors $\{\ld_\varpi\}$ is discrete in $\RR^d$.

The correspondence between values of $\ld=\ld_{\varpi,j}$ and
equivalence classes of unitary representations of $\SL_2(\RR)$ of
infinite dimension is one-to-one if $\ld >0$ for $\x=\x_j=0$, and if
$\ld>\frac14$ if $\x=1$. In the other cases, $\ld=\frac
b2-\frac{b^2}4$ with $b\in \ZZ_{\geq 1}$, $b\equiv \x\bmod 2$. In
these cases, there are two equivalence classes, one with lowest
weight $b$ (holomorphic type), and one with highest weight $-b$
(antiholomorphic type). If $b\geq 2$, representations of these classes
occur discretely in $L^2\bigl(\SL_2(\RR)\bigr)$, and are called
\emph{discrete series representations}. The representations in the
case $b=1$ are sometimes called \emph{mock discrete series}. They do
not occur discretely in $L^2\bigl( \SL_2(\RR) \bigr)$. All these
representations, discrete series or not, may occur as an irreducible
component of $L^{2,\mathrm{cusp}}_\x\left( \G_0(I)\backslash
G,\ch\right)$.

\subsection{Fourier coefficients}\label{sect-Four-coeff}
In the version in~\cite{densII} of the asymptotic formula~\eqref{as1}
the factor $\vol(\G\backslash G)$ is not present, due to a different
choice of the norm in the Hilbert space $L^2_\x(\G\backslash G,\ch)$.
There we used $\|f\|^2 = \int_{\G\backslash G}|f|^2\, dg$, with the
Haar measure $dg$ indicated in \S2.1 of~\cite{BM9}. Here we prefer to
use the norm
\begin{equation} \label{newnorm}
\|f\| = \Bigl( \frac 1{\vol(\G\backslash G)} \int_{\G\backslash G}
|f|^2\, dg\Bigr)^{1/2}\,.
\end{equation}
With this choice the norm $\|f\|$ of $f\in L^2_\x(\G\backslash G,\ch)$
does not change if we consider $f$ as an element of
$L^2_\x(\G_{\!1}\backslash G,\ch)$ for a subgroup $\G_{\!1}$ of
finite index in~$\G$.

As discussed in \S2.3.4 in \cite{BM9}, the Fourier expansion of one
automorphic form in $V_\varpi$ determines the Fourier expansion of
any automorphic form in $V_\varpi$. This is valid at all cusps, not
only at the cusp~$\infty$ considered in~\cite{BM9} and~\cite{densII}.
We review the choice of the Fourier coefficients of an automorphic
representation~$V_\varpi$, indicating the differences in comparison
with~\cite{BM9}. We put references to formulas and sections
in~\cite{BM9} in italics.

Like in {\it(2.27)}, we determine the Fourier coefficients
$c^{\k,r}(\varpi)$, for $\k\in \Pcal$ and $r\in \tilde M_\k'$, by
\begin{equation}\label{Fourc-def}
F_{\k,r} \ps_{\varpi,q} \= c^{\k,r}(\varpi)\, d^r(q,\nu_\varpi)\,
W_q(r,\nu_\varpi)\,.
\end{equation}
Here $d^r(q,\nu_\varpi)$ is the factor with exponentials and gamma
functions in {\it (2.28)}, with a choice of
$\nu_\varpi = (\nu_{\varpi,j})_{1\leq j \leq d}\in $ such that
$\ld_{\varpi,j}=\frac14-\nu_{\varpi,j}^2$. The ``Whittaker function''
$W_q(r,\nu_\varpi)$ on~$G$ is a standard function on~$G$, given in
{\it (2.12)}, with weight $q$ and the right transformation behavior
for translations by elements of~$N$ on the left. The weight~$q$ in
\eqref{Fourc-def} runs over all weights that occur in~$V_\varpi$. The
$\ps_{\varpi,q}$ form a basis of the space of $K$-finite vectors
in~$V_\varpi$. They are related by the action of the Lie algebra of
$G$ and have norms as indicated in~{\it(2.26)}. Then this determines
the Fourier coefficients $c^{\k,r}(\varpi)$, independent of the
weight~$q$. The difference with \cite{BM9} is the choice of the norm
in \eqref{newnorm}, which can be summarized as follows:
\[
\begin{aligned}
&f\in L^2_\x(\G\backslash G,\ch):& \|f\|_{\mathrm{here}}&\=
\frac1{\sqrt{\vol(\G\backslash G)}}\, \|f\|_{\mathrm{there}}\,,\\
&\ps_{\varpi,q}\text{ in (2.26) of \cite{BM9}:}&
\ps_{\varpi,q}^{\mathrm{here}}&\= \sqrt{\vol(\G\backslash G)}\,
\ps_{\varpi,q}^{\mathrm{there}}\,,\\
&c^r(\varpi)\text{ in (2.27) of \cite{BM9}:}&\quad
c^r_{\mathrm{here}}(\varpi) &\= \sqrt{\vol(\G\backslash G)}\,
c^r_{\mathrm{there}}(\varpi)\,.
\end{aligned}
\]
As a consequence, we use now Fourier coefficients
$c^r(\varpi)=c^{\infty,r}(\varpi)$ that are equal to
$\sqrt{\vol(\G\backslash G)}$ times the corresponding coefficients
in~\cite{BM9} and~\cite{densII}. This causes the factor
$\vol(\G\backslash G)$ in \eqref{as1}, which stayed hidden in the
normalization used in~\cite{densII}. The wish to make the dependence
on the ideal $I$ in $\G=\G_0(I)$ explicit is our motivation to go
over to the norm in \eqref{newnorm}.

In the choice of the $c^{\k,r}(\varpi)$ there is for each $\varpi$ the
freedom of a complex factor with absolute value one. In the result we
work with absolute values, so these choices do not influence the
results of this paper.

\subsection{Automorphic forms on the adele group} \label{sect-afag}
 We now consider the embedding of $\G=\G_0(I) \subset \SL_2(F)$ in
$\SL_2(\ad_f)$, where $\ad_f$ is the group of finite $F$-adeles.

We recall that $\ad_f $ is the subset of $\prod_\prm F\lw\prm$ of
those $a=(a_\prm)_\prm \in \prod_\prm F\lw\prm$ such that $a_\prm
\in \Ocal_\prm$ for all but finitely many primes $\prm$ in~$\Ocal$.
By $\Ocal_\prm$ we denote the closure of $\Ocal$ in the completion
$F\lw\prm$ of $F$ at place~$\prm$. The ring $\ad_f$ contains the
subring $\bar\Ocal = \prod_\prm \Ocal_\prm$, which is the completion
of $\Ocal$ for the topology in which the system of neighborhoods
of~$0$ is generated by the non-zero ideals in~$\Ocal$.

The local ring $\Ocal_\prm$ has maximal ideal $\bar\prm$, which is the
closure of $\prm\subset\Ocal$ in $\Ocal_\prm$. The closure $\bar
I_\prm$ of~$I$ in $\Ocal_\prm$ is equal to $\Ocal_\prm$ if $\prm\nmid
I$, and is a nonzero ideal contained in $\bar\prm$ if $\prm\divides
I$.

For each $c\in \SL_2(F)$ the group $\G \cap c\G c^{-1}$ has finite
index in~$\G=\G_0(I)$. This system of subgroups is cofinal with the
system of all congruence subgroups of~$\G$. The closure $\bar\G$ of
 $\G $ in $\SL_2(\ad_f)$ is the completion of~$\G$ in the topology for
which the system of neighborhoods of the unit element is generated by
the congruence subgroups. There is a decomposition as a direct
product
$$\bar\G =\prod_\prm
\bar\G_\prm, \quad \textrm{ where } \bar\G_\prm = \left\{
\begin{array}{l} \SL_2(\Ocal_\prm)\, \textrm{ if } \prm\nmid I, \\
  \left\{ \matc abcd\in \SL_2(\Ocal_\prm)\;:\; c\in \bar
I_\prm\right\}\, \textrm{ if } \prm\divides I.
\end{array}\right.
$$
The isomorphism $(\Ocal/I)^\ast \cong \prod_{\prm\divides I} (
\Ocal/\prm^{v_\prm(I)})^\ast$ implies that the character $\ch$ mod
$I$ can be written uniquely as a product
$\ch (x) = \prod_{\prm\divides I} \ch_\prm(x)$ for $x\in \Ocal$
relatively prime to~$I$, where $\ch_\prm$ is a character of
$(\Ocal/\prm^{v_\prm(I)})^\ast \cong (\bar \Ocal_\prm/\bar
I_\prm)^\ast$. In particular, $\ch_\prm$ induces a character of
$\Ocal_\prm^\ast$.

We now define a character $\hat \ch$ on the subgroup $$\{u \in
\ad_f^\ast :u_\prm\in \Ocal_\prm^\ast, \textrm{ if } \prm\divides I,
\textrm{ and } u_\prm\in F\lw\prm^\ast \textrm{ if } \prm\nmid I
\},$$ by setting $\hat \ch(u) = \prod_{\prm\divides I}
\ch_\prm(u_\prm)$.
If $x\in \Ocal$ is relatively prime to $I$ then, for the diagonal
embedding $\Ocal \subset\ad_f^\ast$, we have $\ch(x)
= \hat\ch(x)$. Thus we have extended the character $\ch$ of
$(\Ocal/I)^\ast$.

Recall that we use the symbol $\ch$ also to denote the character
$\ch\matc abcd = \ch(d)$ of $\G$. We may extend this character $\ch$
to the subgroup $ \{ g\in \SL_2(\ad_f) : g_\prm \in
\bar\G_\prm, \textrm{ for } \prm\divides I\}$ of $\SL_2(\ad_f)$ by
\begin{equation}
\hat \ch \matc abcd \= \prod_{\prm\divides I} \ch_\prm(d) \= \hat
\ch(d)\,.
\end{equation}
This character is trivial on $\SL_2(F\lw\prm)$ for all $\prm\nmid I$.
For all $\prm\divides I$, we denote by ${\hat \ch}_\prm$ the
restriction of $\hat \ch$ to~$\bar\G_\prm$.
\smallskip

The adele ring $\ad$ of $F$ is the direct product $\ad_\infty
\times\ad_f$, with $\ad_\infty=\RR^d$. So $G=\SL_2({\RR })^d =
\SL_2(\ad_\infty)$. The group $\SL_2(F)$ can be viewed as embedded
discretely in~$\SL_2(\ad)$. Any $(\G,\ch)$-automorphic function $f$
on~$G$ determines
uniquely a corresponding function $f_a$ on $\SL_2(\ad)$ by
\begin{equation}
f_a(c(g_\infty,u)) \= \hat\ch(u)^{-1}f(g_\infty)\qquad g_\infty\in
\SL_2(\ad_\infty),\; u\in {\bar \G},\; c\in \SL_2(F)\,.
\end{equation}
Here we use that $\SL_2(\ad_f) = \SL_2(F) \bar\G$, a consequence of
strong approximation for $\SL_2$. So $f_a$ transforms on the right
according to the character $\hat \ch^{-1}$ of $\bar\G$, and is
left-invariant under $\SL_2(F)$. Square integrability of $f$ on
$\G\backslash G$ is equivalent to square integrability of $f_a$ on
$\SL_2(F) \backslash \SL_2(\ad)$. At the finite places we normalize
the Haar measure on $\SL_2(F\lw\prm)$ so that $\bar\G_\prm$ has
volume~$1$.

\section{Hecke algebra}\label{sect-Ha}

One of the advantages of adele groups is the possibility to view the
classical Hecke operators as convolution operators. In this section
we study the structure of a ring of Hecke operators, locally in
\S\ref{sect-lha}, globally in~\S\ref{sssect-gho}.
\medskip

For $\G=\G_0(I)$ and the character~$\ch$, consider 
the convolution algebra $\he$ of compactly supported functions $\ps$
on $\SL_2(\ad_f)$ satisfying
$\ps(u_1gu_2) = \hat\ch(u_1)^{-1}\,\allowbreak \ps(g)\,
\allowbreak \hat\ch(u_2)^{-1}$ for $u_1,u_2,\in\bar\G$ and
$g\in \SL_2(\ad_f)$. Convolution gives not only a multiplication on
$\he$, but also an action of $\he$ on $(\G,\ch)$-automorphic
functions~$f_a$:
\begin{equation}\label{conv1}
f_a \ast \ps (g) \= \int_{\SL_2(\ad_f)} f_a
(gx^{-1})\, \ps(x)\, dx \= \int_{\SL_2(\ad_f)} f_a(g_\infty,x) \,
\ps(x^{-1}g_f )\, dx \,,
\end{equation}
with $g=(g_\infty, g_f)\in \SL_2(\ad) = G\times\SL_2(\ad_f)$. The
support of a non-zero $\ps\in \he$ is a non-empty compact subset of
$\SL_2(\ad_f)$ that satisfies
$\bar\G \,\supp (\ps) \, \bar\G = \supp(\ps)$. By compactness, the
set $\bar\G \backslash \supp(\ps)$ is finite. Strong approximation
allows us to pick representatives of $\bar\G \backslash \supp \ps$ in
$\SL_2(F)$. We normalize the Haar measure on $\SL_2(\ad_f)$ by giving
$\bar\G$ volume~$1$. Then we have for $g_\infty \in G$:
\begin{equation}\label{conv2}
\begin{aligned}
f_a \ast \ps (g_\infty,1) &\= \sum_{\x\in \bar\G \backslash \supp \ps}
f_a(g_\infty,\x^{-1})\,\ps(\x) \\
& \= \sum_{\x\in \G\backslash (\SL_2(F)\cap\supp \ps)} f_a\bigl(
\x^{-1}(\x g_\infty,1)\bigr)\, \ps(\x)\\
& \= \sum_{\x\in \G\backslash (\SL_2(F)\cap\supp \ps)} f(\x g_\infty)
\,\ps(\x)\,.
\end{aligned}
\end{equation}
Thus we obtain the action of $\ps$ in terms of a finite sum of left
translates of the $(\G,\ch)$-automorphic function~$f$.

If $f_a\in L^2\bigl( \SL_2(F) \backslash \SL_2(\ad) \bigr)$, then so
is $F_\x:g\mapsto f_a\bigl(g (1,\x^{-1}) \bigr)\, \ps(\x)$ for all
$\x\in \SL_2(\ad_f)$, and $\|F_\x\|_2 = \|f_a\|_2\; |\ps(\x)|$. Hence
the convolution operator $f_a\mapsto f_a\ast\ps$ defines a bounded
operator on the Hilbert space $L^2\bigl( \SL_2(F) \backslash
\SL_2(\ad) \bigr)$ with norm at most
\begin{equation}\label{normbd}
\|\ps\|_\infty \cdot \# \bigl( \bar\G \backslash \supp \ps\bigr)\,.
\end{equation}
For each $x\in \SL_2(\ad_f)$, right translation $R_x$ given by $(R_x
f)(g) = f(gx)$ is a unitary operator in $L^2\bigl( \SL_2(F)
\backslash \SL_2(\ad) \bigr)$. We put $\ps^\ast(x) =
\overline{\ps(x^{-1})}$. If $\ps$ as above satisfies $\ps^\ast=\ps$,
then convolution by $\ps$ defines a symmetric bounded operator on
$L^2\bigl( \SL_2(F) \backslash \SL_2(\ad) \bigr)$.

\subsection{Local Hecke algebra}\label{sect-lha}
Before defining the subalgebra of $\he$ with which we will work, we
will consider some pertinent local convolution algebras, for primes
outside~$I$. We recall that the character $\hat \ch$ is trivial on
$\SL_2(F\lw\prm)$ for all $\prm\nmid I$.

\begin{defn}For each finite prime~$\prm$ not dividing~$I$ we denote by
$\he_\prm$ the convolution algebra of compactly supported functions
on $\SL_2(F_\prm)$ that are left and right invariant under
translation by elements of $\SL_2(\Ocal_\prm)$.
\end{defn}
We recall the well known description of the structure of~$\he_\prm$.
Let $\hat \pi_\prm$ be a generator of the maximal ideal $\bar\prm$ of
$\Ocal_\prm$.

The unit element of $\he_\prm$ is the characteristic function
$\e_\prm$ of $\bar\G_\prm = \SL_2(\Ocal_\prm)$.

A basis of $\he_\prm$ is given by the
characteristic functions $\heop{\prm^{2k}}$ of the sets
\begin{equation}\label{Dtdecom}
\Dt(\prm^{2k}) \= \left\{ \hat\pi_\prm^{-k}g\;:\; g\in
M_2(\Ocal_\prm),\, \det g \= \hat\pi_\prm^{2k}\right\}\,,
\end{equation}
for $k\in \NN_0$. So $\Dt(\prm^0)=\e_\prm$.
We have
\begin{align}\label{Dtdecomp}
\Dt(\prm^{2k}) &\= \bigsqcup_{l=0}^{2k}
\bigsqcup_{b\in\Ocal_\prm/\bar\prm^l} \bar\G_\prm
\matc{\hat\pi_\prm^{k-l}} {b\hat\pi_\prm^{-k} }0 {\hat\pi_\prm^{l-k}
}\\
\nonumber
&\= \bigsqcup_{l=0}^{2k} \bigsqcup_{\bt\in
\bar\prm^{l-2k}/\bar\prm^{2l-2k}} \bar\G_\prm h\bigl(
\hat\pi_\prm^{k-l}\bigr)
n(\bt) \,,
\end{align}
in the notations of~\S\ref{sect-not}. For $k\geq 1$ we have the
relation:
\begin{equation}\label{Hrelation}
\heop{\prm^{2k}} \ast \heop{\prm^2} \= \heop{\prm^{2k+2}}
+ \nrm\prm\, \heop{\prm^{2k}} + \nrm\prm^2\, \heop{\prm^{2k-2}}\,.
\end{equation}
This implies that $\he_\prm$ is a polynomial ring in the variable
$\heop{\prm^2}$. Furthermore, \eqref{Hrelation} allows us to check
that $\he_\prm$ is isomorphic to the subring of the ring
$\CC[X_\prm,X_\prm^{-1}]$ of even Laurent polynomials that are
invariant under $X_\prm\mapsto X_\prm^{-1}$, by sending
$\heop{\prm^{2k}}$ to
\begin{equation}\label{Tpk}
\nrm\prm^k \sum_{j=0}^{2k} X_\prm^{2k-2j}\,.
\end{equation}
Each character of $\he_\prm$ is uniquely determined by a map
$X_\prm \mapsto \nrm\prm^{\nu_\prm}$, where the parameter
\begin{equation}\label{nuprm}
\nu_\prm \in \CC\bmod \frac{\pi i}{\log \nrm\prm} \ZZ\end{equation}
is determined up to $\nu_\prm\mapsto -\nu_\prm$. This parameter is
related (but not equal to) the usual Satake parameter in
$\CC\bmod \frac{2\pi i}{\log \nrm\prm}\ZZ$ in the case when the Hecke
operator $\heop\prm$ corresponding to
$\sqrt{\nrm\prm}\,(X_\prm+\nobreak X_\prm^{-1})$ can be defined.

Thus we have described the structure of $\he_\prm$ completely for
primes $\prm\dividesnot I$. For primes $\prm$ dividing $I$ the
algebra $\he_\prm$ is the convolution algebra of compactly supported
functions on $\SL_2(F_\prm)$ that transform on the left and on the
right by the character $\hat\ch_\prm^{-1}$ of $\bar\G_\prm$. We do
not go into its structure, and only note that its unit element
$\e_\prm$ is equal to $\hat\ch_\prm^{-1}$ on $\bar\G_\prm$ and is
zero outside $\bar\G_\prm$.

\subsection{Global Hecke algebra}\label{sssect-gho}
We shall work with the following subring of~$\he$:
\begin{defn} We denote by $\he^I$ the convolution subalgebra of $\he$
of compactly supported bi-$\hat\ch^{-1}$-equivariant functions on
$\SL_2(\ad_f)$ spanned by
\[ \mathop{\textstyle\bigotimes}_{\prm\dividesnot I}\ps_\prm \,
\otimes\, \mathop{\textstyle\bigotimes}_{\prm\divides I} \e_\prm\,,\]
where $\ps_\prm\in \he_\prm$ for all $\prm\dividesnot I$, and
$\ps_\prm=\e_\prm$ for all but finitely many~$\prm$.
\end{defn}
We denote by $\otimes_\prm \ps_\prm$ the function
$g\mapsto \prod_\prm \ps_\prm(g_\prm)$ on the group $\SL_2(\ad_f)$.
The condition that $\ps_\prm=\e_\prm$ for almost all~$\prm$ ensures
that $ \otimes_\prm \ps_\prm$ makes sense. The algebra $\he^I$ is
isomorphic to the (restricted) tensor product of the algebras
$\he_\prm$ with $\prm\dividesnot I$. It is a commutative algebra. In
this way we restrict our attention to the places where the local
Hecke algebra is unramified.

Next we build elements of $\he$ from elements of $\he_\prm$ with
$\prm\nmid I$. Since the character $\hat \ch$ may be non-trivial this
requires some care. We also describe the action of elements of $\he$
on automorphic forms for $\G_0(I)$.\medskip

We consider a nonzero ideal $\aid$ in $\Ocal$ prime to~$I$. It has the
 form $\aid = \prod_{\prm\in P} \prm^{k_\prm}$, with $P$ a finite set
of prime ideals not dividing~$I$, and positive $k_\prm$. We define
\begin{equation}\label{Ta2d}
\heop{\aid^2} \= 
\mathop{\textstyle\bigotimes}_{\prm\in P} \heop{\prm^{2k_\prm}}
\otimes \mathop{\textstyle\bigotimes}_{\prm\not\in P}\e_\prm\,.
\end{equation}
Thus defined, convolution by $\heop{\aid^2}$ satisfies
$\heop{\aid^2}^\ast \= \heop{\aid^2}$.

With $\aid$ and $P$ as above, we consider the action of
$\heop{\aid^2}$ on automorphic forms, first under the assumption that
all $\prm \in P $ are principal ideals in $\Ocal$.

So $\prm=(\pi_\prm)$. Then
 $\pi_\prm \in \hat\pi_\prm \Ocal_\prm^\ast$, and
$\pi_\prm \in \Ocal_\qrm^\ast$ at all other finite places~$\qrm$. The
element $\hat\pi_\prm$ embedded in~$\ad_f^\ast$ by taking~$1$ at all
places not equal to~$\prm$, satisfies
$\hat\ch\bigl(\hat\pi_\prm\bigr)=1$. However, the representative
$\pi_\prm\in F^\ast$ need not be in the kernel of~$\ch$, so we may
have $\hat\ch(\pi_\prm)\neq 0$.

The Iwasawa decomposition of $\SL_2(F_\prm)$ shows that we can choose
representatives $\hat x$ of $\bar\G \backslash \supp \heop{\aid^2}$
of the form
$\hat x=\matc q00{1/q} \matc 1{\hat\bt}01 \in \SL_2(\ad_f)$, with
\begin{alignat}3 \label{qbt}
q_\prm &\= \hat\pi_\prm^{k_\prm-l_\prm}\,,&\quad \hat\bt_\prm &\in
\bar\prm^{l_\prm-2k_\prm}
&\qquad&\text{if }\prm\in P\,,\\
\nonumber
q_\qrm &\= 1 \,,& \hat \bt_\qrm&\=0&&\text{if }\qrm\not\in P\,,
\end{alignat}
where each $l_\prm$ runs from $0$ to $2k_\prm$, and where $\hat
\bt_\prm$ runs through a system of representatives of
$\bar\prm^{l_\prm-2k_\prm}/\bar\prm^{2l_\prm-2k_\prm}$. The
representatives $\hat x$ need not be in $\SL_2(F)$. We take
\[ x\=\matc {q_\bid}00{1/q_\bid} \matc 1\bt01 \in \SL_2(F)\]
such that $x\in \bar\G \hat x$ with
$\bid = \prod_{\prm\in P} \prm^{l_\prm}$,
$q_\bid= \prod_{\prm\in P} \pi_\prm^{k_\prm-l_\prm}$, and $\bt$ in a
system of representatives of $\bid\,\aid^{-2} / \bid^2\, \aid^{-2}$.
In this way for $g \in \SL_2(\ad)$
\begin{equation}
\bigl( f_a\ast \heop{\aid^2}\bigr)(g) \= \sum_{\bid\divides \aid^2}
\sum_{\bt \in \bid\aid^{-2} / \bid^2 \aid^{-2}} \hat \ch(q_\bid)
f_a\left( g \left( 1, n(\bt)^{-1} h(q_\bid)^{-1} \right) \right)\,,
\end{equation}
which becomes in terms of $(\G,\ch)$-automorphic functions
on~$\G\backslash G$:
\begin{equation}\label{He-r-p}
\bigl( f|\heop{\aid^2} \bigr)(g_\infty)\= \sum_{\bid\divides \aid^2}
\sum_{\bt \in \bid\aid^{-2} / \bid^2 \aid^{-2}} \hat \ch(q_\bid) f
\left( h(q_\bid) n(\bt)
g_\infty\right)\qquad(g_\infty\in G)\,.
\end{equation}

If we can choose $\pi_\prm$ totally positive for all $\prm\in P$, a
description of $\heop\aid$ similar to the latter formula is possible
for all ideals $\aid$ built with prime ideals in~$P$.\smallskip

In general, not all prime ideals $\prm\in P$ are principal, so we
proceed as follows. Taking $\bt\in \bid\aid^{-2} \subset F$,
$\bt \in \hat \bt+\bid^2\aid^{-2}$, we have in the notations
of~\eqref{qbt}:
\[ h(q) n(\hat \bt) \in \bar\G h(q)n(\bt) \,. \]
By strong approximation there exists $g_\bid\in \SL_2(F)$ such that
\begin{equation}\label{gbt}
h(q)n(\hat \bt) \in \bar\G g_\bid \matc 1\bt01\,.
\end{equation}
Indeed, for $\bid$ such that $\aid\,\bid^{-1}$ is principal, we may
take $g_\bid$ to be a diagonal matrix, in particular,
$g_\Ocal = \matc 1001$. For other~$\bid$ we use that in any field we
have
\[ \matc t00{1/t} \= \matc 1t01 \matc10{-t^{-1}}1 \matc 1t01
\matr0{-1}10\qquad(t\neq0)\,.\]
For any ideal $J$ there are elements $\x,\eta\in F$ and
$u,v\in \bar J$, such that $q=\x+u$, $q^{-1}=\eta+v$ for $q$ as
in~\eqref{qbt}. If we take $J$ as the product of sufficiently high
(depending on $q$) powers of primes in~$P$ and $I$, we have
\[ h(q) \= \matc 1{\x+u}01 \matc10{-\eta-v}1 \matc1{\x+u}01
\matr0{-1}10 \in \bar \G \matc 1\x01\matc10{-\eta}1 \matc
1\x01\matr0{-1}10\,,\]
so that $h(q) \in \bar\G \cdot\SL_2(F)$.
This leads to the following description of convolution by
$\heop{\aid^2}$:
\begin{equation}
\bigl( f_a \ast \heop{\aid^2} \bigr)(g) \= \sum_{\bid\divides \aid^2}
\hat \ch( g_\bid)^{-1} \sum_{\bt \in \bid\aid^{-2}/\bid^2\aid^{-2}}
f_a\left( g,\left(1, n(\bt)^{-1} g_\bid^{-1} \right)
\right)\,,
\end{equation}
for any $g \in \SL_2(\ad)$. Since the $g_\bid n(\bt)$ in this sum are
elements of $\SL_2(F)$, we can go over to the corresponding function
on $G$, and obtain:
\begin{prop}\label{prop-Heop}Let $\aid$ be a non-zero integral ideal
in $\Ocal$ prime to~$I$. For each integral ideal $\bid$ dividing
$\aid$ there exists $g_\bid\in \SL_2(F)$ such that for any
$(\G,\ch)$-automorphic function $f$ on~$G$, and for any $g \in G$:
\begin{equation}\label{Ta2}
\bigl( f|\heop{\aid^2} \bigr)(g) \= \sum_{\bid\divides \aid^2} \hat
\ch( g_\bid)^{-1} \sum_{\bt \in \bid\aid^{-2}/\bid^2\aid^{-2}}
f\left( g_\bid n(\bt) g\right)\,.
\end{equation}

In the sum, $\bt$ runs over elements of $\bid\aid^{-2}\subset F$
representing the classes modulo $\bid^2\aid^2$. The elements $g_\bid$
are in $\SL_2(F)$. For $\bid=\Ocal$ we can take
$g_\bid = \mathrm{Id}$. 
If all prime ideals dividing $\aid$ are principal we can take all
$g_\bid$ to be diagonal matrices satisfying~\eqref{gbt}.
\end{prop}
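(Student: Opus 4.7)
The plan is to derive formula \eqref{Ta2} by evaluating the adelic convolution $(f_a\ast\heop{\aid^2})$ at points of the form $(g_\infty,1)$ and then translating back to $G$ via the left-$\SL_2(F)$-invariance of $f_a$. The first task is to choose coset representatives for $\bar\G\backslash\supp\heop{\aid^2}$. Since $\heop{\aid^2}$ factors as a tensor product over primes $\prm\in P$, the local decomposition \eqref{Dtdecomp} provides natural representatives of the form $\hat x = h(q)\,n(\hat\bt)$, with $q$ and $\hat\bt$ as in~\eqref{qbt}, parametrized by a divisor $\bid\mid\aid^2$ together with residues $\hat\bt$ at the primes in~$P$.

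The obstacle is that such $\hat x$ live in $\SL_2(\ad_f)$ and are not in general in $\SL_2(F)$, so the left-invariance of $f_a$ cannot be applied directly. Invoking strong approximation $\SL_2(\ad_f)=\SL_2(F)\,\bar\G$, I would seek a rewriting $\hat x = u\,g_\bid\,n(\bt)$ with $u\in\bar\G$, $g_\bid\in\SL_2(F)$, and $\bt\in\bid\aid^{-2}\subset F$ congruent to $\hat\bt$ modulo $\bid^2\aid^{-2}$. Crucially, $h(q)$ depends only on the divisor $\bid$, so a single $g_\bid$ can be chosen uniformly in $\hat\bt$. When $\aid\bid^{-1}$ is principal, a generator produces a diagonal $g_\bid=h(q_\bid)$, recovering the explicit formula~\eqref{He-r-p}.

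The nontrivial case, which I regard as the main technical step, is when $\aid\bid^{-1}$ is not principal. Here I would use the Bruhat-type identity
\[ h(t) \= \matc 1 t 0 1 \matc 1 0 {-t^{-1}} 1 \matc 1 t 0 1 \matr 0 {-1} 1 0 \]
and approximate $t=q$ and $t^{-1}=q^{-1}$ adelically by elements of $F$ modulo a sufficiently high power of the primes in $P$ together with those dividing~$I$: the Chinese remainder theorem (strong approximation for the additive group) produces $\x,\eta\in F$ and $u,v$ in a suitably chosen large ideal so that $q=\x+u$, $q^{-1}=\eta+v$. Substituting puts $h(q)$ into $\bar\G\cdot\SL_2(F)$, and the $\SL_2(F)$-factor is the desired $g_\bid$.

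Finally, I would track the character. The bi-$\hat\ch^{-1}$-equivariance of $\heop{\aid^2}$ applied to $\hat x = u\,g_\bid n(\bt)$ contributes $\hat\ch(u)^{-1}$, while the right-$\hat\ch^{-1}$-equivariance of $f_a$, combined with the diagonal embedding of $g_\bid n(\bt)\in\SL_2(F)\subset\SL_2(\ad)$ and left-$\SL_2(F)$-invariance of $f_a$, produces the translate $f(g_\bid n(\bt) g_\infty)$ with a scalar $\hat\ch(g_\bid)^{-1}$. Summing over $\bid$ and $\bt$ yields~\eqref{Ta2}. The subtlety I expect to watch most carefully is arranging the approximations $\x,\eta$ so that the residual $u\in\bar\G$ is trivial at primes dividing~$I$, ensuring that $\hat\ch(g_\bid)^{-1}$ is indeed the only surviving character factor and that the residues $\bt$ exhaust $\bid\aid^{-2}/\bid^2\aid^{-2}$ without collapse.
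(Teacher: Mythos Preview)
Your proposal is correct and follows essentially the same line as the paper's argument (the discussion in \S\ref{sssect-gho} leading up to the proposition): coset representatives $\hat x = h(q)n(\hat\bt)$ from~\eqref{Dtdecomp}, strong approximation to produce $g_\bid\in\SL_2(F)$, and the Bruhat-type identity for $h(q)$ when $\aid\bid^{-1}$ is non-principal. One small correction to your last paragraph: you should not expect the residual $u\in\bar\G$ to be trivial at primes dividing~$I$; rather, since $\hat x = h(q)n(\hat\bt)$ is already the identity at those primes, you have $u_\prm = (g_\bid n(\bt))_\prm^{-1}$ there automatically, and this (together with $\hat\ch(n(\bt))=1$) is exactly what yields $\hat\ch(u)=\hat\ch(g_\bid)^{-1}$.
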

In particular, the Hecke operators act in the space
$L^2_\x(\G\backslash G,\ch)$ as bounded operators.
(See~\eqref{normbd}.) The relation $T(\aid^2)^\ast = T(\aid^2)$
implies that $T(\aid^2)$ acts as a symmetric bounded operator. Its
norm is bounded by the number of terms in~\eqref{Ta2}.

\section{Distribution of eigenvalues of Hecke
operators}\label{sect-ad}
It will be critical for this paper to work out the relation between
the eigenvalues of Hecke operators and the Fourier coefficients of
automorphic forms. In the first subsection we discuss the action of
Hecke operators on Fourier expansions (Propositions
\ref{prop-Fourterm} and~\ref{prop-He0}) and then we apply these
results to give an expression for the eigenvalues of Hecke operators
on automorphic cusp forms (Theorem~\ref{thm-ew-pi}). In the final
subsection we give a proof of our main result, Theorem~\ref{thm-afH}
in several steps. The main tools in the proof are
Theorems~\ref{thm-af'} and~\ref{thm-ew-pi}.

\subsection{Hecke operators and Fourier expansion}

\begin{lem}\label{lem-fb}Let $\aid$ be a non-zero integral ideal in
$\Ocal$ relatively prime to~$I$, and let $\bid\divides \aid^2$. Then
there is a unique $\k\in \Pcal$ such that the element~$g_\bid$ in
Proposition~\ref{prop-Heop} has the form $g_\bid=\g g_\k p$ with
$\g\in \G$ and $p=n(b)\,h(a) \in \SL_2(F)$.

For each $(\G,\ch)$-automorphic function $f$ on~$G$ the function
\begin{equation}
\label{fb}
f_\bid: g \mapsto \sum_{\bt\in \bid\aid^{-2}/\bid^2\aid^{-2}} f(g_\bid
\, n(\bt)\, g)
\end{equation}
is left-invariant under $\GmN\infty$. The Fourier terms of $f_\bid$
are given by
\begin{equation}\label{Ffb}
F_{\infty,r} f_\bid(g) \= \begin{cases}
\nrm\bid\; \ch(\g) \, F_{\k,a^{-2}r} f(pg)&\text{ for }r\in
\bid^{-1}\aid^2 \Ocal'\,,\\
0&\text{ for other }r\in \Ocal'\,.
\end{cases}
\end{equation}
\end{lem}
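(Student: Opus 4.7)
For the first assertion I would observe that $g_\bid\infty\in\mathbb{P}^1(F)$ is a cusp of~$\G$, hence $\G$-equivalent to a unique $\k\in\Pcal$. Writing $g_\bid\infty=\g g_\k\infty$ with $\g\in\G$, the element $g_\k^{-1}\g^{-1}g_\bid$ stabilizes~$\infty$ in $\SL_2(F)$ and so has the form $n(b)h(a)$ for some $a\in F^\ast$, $b\in F$; this yields $g_\bid=\g g_\k p$ with $p=n(b)h(a)$, and uniqueness of~$\k$ is forced by the construction of~$\Pcal$.

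\textbf{Invariance under $\GmN\infty$.} Fix a set $R\subset\bid\aid^{-2}$ of representatives for $\bid\aid^{-2}/\bid^2\aid^{-2}$, so $f_\bid(g)=\sum_{\bt\in R}f(g_\bid n(\bt)g)$. The identity $h(a)n(\bt)=n(a^2\bt)h(a)$ gives $f(g_\bid n(\bt)g)=\ch(\g)\,f\bigl(g_\k n(b+a^2\bt)h(a)g\bigr)$. For $\mu\in\Ocal$ I use the inclusion $\Ocal\subseteq\bid\aid^{-2}$ (which holds because $\bid\mid\aid^2$ implies $\aid^2\subseteq\bid$) to conclude $\bt+\mu\in\bid\aid^{-2}$; its class modulo $\bid^2\aid^{-2}$ is represented by some $\bt_\ast\in R$, and $\eta_\bt\isdef\bt+\mu-\bt_\ast\in\bid^2\aid^{-2}$. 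The strong-approximation description of $g_\bid$ in Proposition~\ref{prop-Heop} forces the compatibility $a^2\bid^2\aid^{-2}\subseteq M_\k$, so $g_\bid n(\eta_\bt)g_\bid^{-1}=\g g_\k n(a^2\eta_\bt)g_\k^{-1}\g^{-1}$ lies in $\GmN\k\subseteq\G$; moreover, coprimality of $\aid$ with~$I$ makes $\ch$ trivial on this element. Hence $f(g_\bid n(\bt+\mu)g)=f(g_\bid n(\bt_\ast)g)$, and summation over the bijection $\bt\mapsto\bt_\ast$ of~$R$ gives $f_\bid(n(\mu)g)=f_\bid(g)$.

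\textbf{Fourier coefficients.} I would expand $f$ at the cusp~$\k$ as $f(g_\k n(y)h(a)g)=\sum_{s\in\tilde M_\k'}F_{\k,s}f(h(a)g)\,e^{2\pi iS(sy)}$, substitute $y=b+a^2(\bt+x)$ into the rewriting from the invariance step, and sum over $\bt\in R$ to obtain
\[ f_\bid(n(x)g)=\ch(\g)\sum_{s\in\tilde M_\k'}e^{2\pi iS(sb)}F_{\k,s}f(h(a)g)\,e^{2\pi iS(sa^2x)}\Bigl(\sum_{\bt\in R}e^{2\pi iS(sa^2\bt)}\Bigr). \]
The inner character sum on $\bid\aid^{-2}/\bid^2\aid^{-2}$ equals $\nrm\bid$ exactly when the character $\bt\mapsto e^{2\pi iS(sa^2\bt)}$ is trivial on $\bid\aid^{-2}$, and vanishes otherwise. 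Reading off the coefficient of $e^{2\pi iS(rx)}$ with $r\in\Ocal'$ forces $s=a^{-2}r$, and the triviality condition translates exactly to $r\in\bid^{-1}\aid^2\Ocal'$. Since $e^{2\pi iS(sb)}F_{\k,s}f(h(a)g)=F_{\k,s}f(n(b)h(a)g)=F_{\k,s}f(pg)$, the formula~\eqref{Ffb} drops out. The hard part will be the verification used in the invariance step that $g_\bid n(\eta)g_\bid^{-1}\in\G$ with trivial $\ch$-value for every $\eta\in\bid^2\aid^{-2}$; this requires chasing through the strong-approximation construction of $g_\bid$ from Proposition~\ref{prop-Heop} together with the divisibility relating~$\aid$ and~$I$.
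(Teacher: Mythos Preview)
Your decomposition and invariance arguments are correct and match the paper's. Your Fourier computation via expanding $f$ at the cusp~$\k$ is a legitimate alternative to the paper's direct integration, and the character-sum evaluation is cleaner than the paper's volume bookkeeping. However, there is a genuine gap.

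You establish only the inclusion $a^2\bid^2\aid^{-2}\subseteq M_\k$, whereas the paper proves the \emph{equality} $a^2\bid^2\aid^{-2}=M_\k$, and this equality is needed at two places in your own argument. First, from your inclusion you only know that $\ch$ is trivial on the subgroup $g_\k\,n(a^2\bid^2\aid^{-2})\,g_\k^{-1}$ of~$\GmN\k$; without knowing this subgroup is all of~$\GmN\k$ you cannot conclude $\k\in\Pcal_\ch$, so $\tilde M_\k'$ may be a nontrivial coset of~$M_\k'$ and your Fourier expansion over $s\in\tilde M_\k'$ is not the one you manipulate. Second, when you ``read off'' $s=a^{-2}r$ you implicitly assume $a^{-2}r\in\tilde M_\k'$; if it is not, your series has no term at frequency~$r$ and you would get $0$ rather than the stated $\nrm\bid\,\ch(\g)\,F_{\k,a^{-2}r}f(pg)$. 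To rule this out you need $a^{-2}\bid^{-1}\aid^2\Ocal'\subseteq M_\k'$, i.e.\ $M_\k\subseteq a^2\bid\aid^{-2}$, which follows from the equality but not from your one-sided inclusion.

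The paper obtains the reverse inclusion as follows: if $x\in M_\k$ then $g_\k n(x)g_\k^{-1}\in\G$, hence $g_\bid\,n(a^{-2}x)\,g_\bid^{-1}\in\G\subset\bar\G$; writing $g_\bid=u\,h(q)$ with $u\in\bar\G$ and $q\bar\Ocal=\bar\aid\bar\bid^{-1}$ gives $n(q^2a^{-2}x)\in\bar\G$, so $q^2a^{-2}x\in\bar\Ocal$ and therefore $x\in a^2\bid^2\aid^{-2}$. Once you insert this step, your Fourier-expansion approach goes through and yields exactly~\eqref{Ffb}.
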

\begin{proof}The element $g_\bid\in \SL_2(F)$ is in $\bar \G \, h(q)$
with $q$ as in~\eqref{qbt}. Since $g_\bid\,\infty$ is a cusp, it is
of the form $\g \k$ with $\g\in \G$, for a unique $\k\in \Pcal$. Then
$g_\k^{-1}\g^{-1}g_\bid \in \SL_2(F)$ fixes $\infty$, and is hence of
the form $p=n(b)h(a)$ with $b\in F$, $a\in F^\ast$.

We claim that
\begin{equation}\label{M-expr}
a^2 \bid^2 \aid^{-2} \= M_\k\,.
\end{equation}

Indeed, the construction of the system of representatives in
\S\ref{sssect-gho} implies that for different $\bt$ in a system of
representatives of $\bid\aid^{-2}/\bid^2\aid^{-2}$ the sets
$\G\g_\bid n(\bt)$ are disjoint, and furthermore $g_\bid n(\bt)
g_\bid^{-1}\in \G$ if $\bt \in \bid^2\aid^{-2}$. Hence, for all $\bt
\in \bid^2\aid^{-2}$ we have $g_\k pn(\bt) p^{-1} g_\k^{-1}\in \G$,
which implies that $n(a^2\bt)\in g_\k^{-1} \GmN\k g_\k$ for all
such~$\bt$. This shows that $a^2 \bid^2 \aid^{-2} \subset M_\k$.

Conversely, if $x\in M_\k$, then $g_\k n(x)g_\k^{-1}\in \G$, and
$g_\bid n(a^{-2}x)g_\bid^{-1} \in \g^{-1}\G\g=\G$. We have
$g_\bid=u h(q)$, with $u\in \bar\G$. Hence
$h(q) n(a^{-2}x)h(q)^{-1}\in \bar\G$, and $q^2 a^{-2}x
\in \bar\Ocal$. Since $q\bar\Ocal = \bar \aid \hat \bid^{-1}$, we
have $x\in a^2 \hat\bid^2 \bar \aid^{-2}\cap F = a^2
\bid^2\aid^{-2}$.

Moreover, for $\bt\in \bid^2\aid^{-2}$, we have
\[\ch\left( g_\k n(a^2\bt)
g_\k^{-1}\right)
= \hat \ch(g_\k p) \hat\ch\bigl(n(\bt\bigr) \hat\ch(g_\k
p)^{-1}=1\,.\]
So $\ch$ is trivial on $\GmN\k$ and $\tilde M_\k'$ as defined
in~\eqref{Mtk-def} is equal to $M_\k' = a^{-2}
\bid^{-2} \aid^2\Ocal'$.

The set $ \bigsqcup_{\bt\in \bid\aid^{-2}/\bid^2\aid^{-2}} \G g_\bid
n(\bt)$ is right-invariant under the group $\bigl\{n(\om)\;:\; \om
\in \bid\aid^{-2}\bigr\}$, which contains $\GmN\infty$. Hence
$f_\bid$ in~\eqref{fb} is left-invariant under $\GmN\infty$. It has a
Fourier expansion with terms of order $r\in \Ocal'$, and the terms
with $r$ outside $(\bid\aid^{-2})'=\bid^{-1}\aid^2\Ocal'$ vanish. For
$r\in \bid^{-1}\aid^2\Ocal'$:
\begin{align*}
F_{\infty,r}&f_\bid(g) \= \frac1{\vol(\RR^d/\bid\aid^{-2})}
\int_{\RR^d/\bid\aid^{-2}} e^{-2\pi i S(rx)}\, f_\bid\left( n(x)
g\right)\,dx\displaybreak[0]
\\
\nonumber
&\= \frac1{\vol(\RR^d/\bid\aid^{-2})} \int_{\RR^d/\bid\aid^{-2}}
\sum_{\bt\in \bid\aid^{-2}/\bid^2\aid^{-2}} e^{-2\pi i
S\bigl(r(x+\bt)\bigr)} \, f\left( \g g_\k p n(x+\bt) g \right)\,
dx\,,
\end{align*}
where we have written $g_\bid=\g g_\k p$, and have used that
$S(r\bt) \in \ZZ$. The function
$x\mapsto e^{-2\pi i S(rx)}  \allowbreak
f\left( \g g_\k p n(x) g \right)$ is a function on
$\RR^d/\bid^2\aid^{-2}$.  Hence the
integration over~$x\in\RR^d/\bid\aid^{-2}$ and the summation
over~$\bt \in \bt\aid^{-2}/\bid^2\aid^{-2}$ is replaced by
integration over $x\in \RR^d/\bid^2\aid^{-2}$:
\[F_{\infty,r} f_\bid(g) \= \frac1{\vol(\RR^d/\bid\aid^{-2})}
\int_{\RR^d/\bid^2\aid^{-2}} e^{-2\pi i S(rx)}\, f\left( \g g_\k p
n(x) g \right)\,dx\,. \]
Using also $p=n(b)h(a)$ and the fact that $f$ is
$(\G,\ch)$-automorphic we obtain
\begin{align*}
F_{\infty,r}&f_\bid(g) \= \frac{\ch(\g)}{\vol(\RR^d/\bid\aid^{-2})}
\int_{\RR^d/\bid^2\aid^{-2}} e^{-2\pi i S(rx)} \, f\left( g_\k n(b)
h(a)
n(x) g\right)\, dx \displaybreak[0]
\\
&\=\frac{\ch(\g)}{\vol(\RR^d/\bid\aid^{-2})}
\int_{\RR^d/\bid^2\aid^{-2}} e^{-2\pi i S(rx)} \, f\left( g_\k
n(a^2x) p g\right)\, dx\,.
\end{align*}
Next we carry out the substitution $x\mapsto a^{-2}x$:
\[ F_{\infty,r}f_\bid(g) \= \frac{\ch(\g)}{|\nrm
a|^2\;\vol(\RR^d/\bid\aid^{-2})} \int_{\RR^d/a^2\bid^2\aid^{-2}}
e^{-2\pi iS(ra^{-2}x)} \, f\left( g_\k n(x) p g\right)\,.\]
 We have seen in~\eqref{M-expr}
that $a^2\bid^2\aid^{-2}=M_\k$. So we obtain the formula
in~\eqref{Ffb} when we show that
\[ \frac1{\bigl|\nrm a \bigr|^2\, \vol(\RR^d/\bid\aid^{-2})} \=
\frac{\nrm \bid}{\vol(\RR^2/M_\k)}\,.\]
This follows from the facts that $\vol\bigl(
\RR^d/\bid\cid\bigr) \= \nrm \bid \,\allowbreak\vol\bigl( \RR^d/\cid\bigr)
$ and $\vol\bigl(\RR^d/a\Ld\bigr) = |\nrm
a|\allowbreak\, \vol\bigl( \RR^d/\Ld\bigr)$, for all fractional
ideals $\bid$ and~$\cid$, all lattices $\Ld$, and all $a\in F^\ast$.
\end{proof}
\begin{lem}\label{lem-fbi} Suppose that $\k=\infty$ in the situation
of Lemma~\ref{lem-fb}. Then we can choose $g_\bid=h(a)$, with
$a\in F^\ast$, and $\g=1$. The Fourier terms of $f_\bid$ are given by
\begin{equation}
F_{\infty,r} f_\bid(g) \= \begin{cases}\nrm\bid\; F_{\infty,
a^{-2}r}f\bigl( h(a)g\bigr)&\text{ if } r\in
\bid^{-1}\aid^2\Ocal'\,,\\
0&\text{ for other }r\in \Ocal'\,.
\end{cases}
\end{equation}
If $\bid=\aid$, then we can further take $a=1$, while if
$\bid\neq\aid$ then $a\in F^\ast\setminus \Ocal^\ast$.
\end{lem}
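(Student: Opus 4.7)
The plan is to specialize Lemma~\ref{lem-fb} to $\k=\infty$ and use the residual freedom in the choice of $g_\bid$ to eliminate its upper-triangular part. Since $g_\infty=1$, Lemma~\ref{lem-fb} already yields $g_\bid=\g p$ with $\g\in\G$ and $p=n(b)h(a)$, together with the ideal relation $a^2\bid^2\aid^{-2}=M_\infty=\Ocal$, so $(a)=\aid\bid^{-1}$. The formula in Proposition~\ref{prop-Heop} is invariant under the replacement $g_\bid\mapsto\g^{-1}g_\bid$ for $\g\in\G$: the factor $\hat\ch(g_\bid)^{-1}$ picks up $\ch(\g)$, while $f(\g^{-1}g_\bid n(\bt)g)=\ch(\g)^{-1}f(g_\bid n(\bt)g)$ by $(\G,\ch)$-automorphy, and these cancel. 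Hence we may take $\g=1$, so that $g_\bid=n(b)h(a)$.

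Next I would absorb $n(b)$ into the Fourier sum. Using $n(b)h(a)=h(a)n(a^{-2}b)$ gives
\begin{equation*}
f_\bid(g)\=\sum_{\bt\in\bid\aid^{-2}/\bid^2\aid^{-2}} f\bigl(h(a)\,n(a^{-2}b+\bt)\,g\bigr).
\end{equation*}
If $a^{-2}b\in\bid\aid^{-2}$, then the substitution $\bt\mapsto\bt-a^{-2}b$ permutes a set of coset representatives of $\bid^2\aid^{-2}$ in $\bid\aid^{-2}$, and the sum becomes $\sum_\bt f(h(a)n(\bt)g)$, so $g_\bid=h(a)$ is a permissible choice. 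Using $a^2=\aid^2\bid^{-2}$ this condition reduces to $b\in\bid^{-1}$. To verify it I work locally: by the construction in Proposition~\ref{prop-Heop}, $g_\bid\in\bar\G\,h(q)$, hence $n(b)h(a/q)\in\bar\G$; since $v_\prm(a/q)=0$ at every prime (from the definition of $q$ and the fact $(a)=\aid\bid^{-1}$), requiring the entries of the matrix $n(b)h(a/q)$ to be integral at every prime forces $b\in\Ocal$, and $\Ocal\subset\bid^{-1}$ because $\bid$ is integral.

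With $\g=1$ and $g_\bid=h(a)$, the Fourier-term formula is an immediate specialization of~\eqref{Ffb}. For the dichotomy, the fractional ideal $(a)=\aid\bid^{-1}$ equals $\Ocal$ if and only if $\bid=\aid$, so $a\in\Ocal^\ast$ in that case and $a\in F^\ast\setminus\Ocal^\ast$ otherwise. When $\bid=\aid$, I can further left-multiply $g_\bid=h(a)$ by $h(a^{-1})\in\G$ (permissible by the same $\G$-freedom as above), which reduces $a$ to $1$. The main delicate point in the plan is the local integrality computation that yields $b\in\Ocal$; everything else is a routine rearrangement of what Lemma~\ref{lem-fb} already provides.
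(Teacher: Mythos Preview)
Your proof is correct and reaches the same conclusion as the paper's, via the same key fact that $(a)=\aid\bid^{-1}$. The paper's argument is slightly more direct: from $g_\bid=\g n(b)h(a)\in\bar\G h(q)$ one reads off $a=uq$ with $u\in\bar\Ocal^\ast$, hence $h(a)\in\bar\G h(q)\cap\SL_2(F)$ is itself a legitimate choice of $g_\bid$ in the sense of~\eqref{gbt}, and one simply re-selects it (invoking the remark before Proposition~\ref{prop-Heop} that $g_\bid$ may be taken diagonal when $\aid\bid^{-1}$ is principal). Your route---peeling off $\g$ by $\G$-equivariance and then absorbing $n(b)$ into the $\bt$-sum after checking $b\in\Ocal$---is more explicit but does a bit more work than necessary: once you have $b\in\Ocal$, you could note $n(b)\in\G$ and remove it by the same $\G$-freedom you used for~$\g$, bypassing the coset-shift argument entirely. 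Either way the Fourier-term formula and the dichotomy follow exactly as you say.
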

\begin{proof}We have $g_\bid = \g n(b)h(a) \in \bar \G h(q)$ with $q$
as in~\eqref{qbt}. This implies that $a = u q$ with
$u\in \bar\Ocal^\ast$, and hence the ideal $q\,\bar\Ocal =
\aid\bid^{-1}\bar\Ocal$ is generated by $a\in F^\ast$. So
$\aid\bid^{-1}$ is a principal ideal, $a$ is a generator of
$\aid\bid^{-1}$, and we can choose $g_\bid = h(a)$, $\g=1$. So
$p=h(a)$ and the formulas for the Fourier terms follow from
\eqref{Ffb} in the previous lemma.
\end{proof}

\begin{prop}\label{prop-Fourterm} Let $\aid$ be a non-zero integral
ideal in $\Ocal$ relatively prime to~$I$, and let $f$ be a
$(\G,\ch)$-automorphic function on~$G$. For each non-zero integral
ideal $\bid$ dividing $\aid$ there exist unique $\k(\bid)\in \Pcal$,
$p(\bid) = n\bigl( b(\bid)\bigr)\,
h\bigl( a(\bid) \bigr) \in \SL_2(F)$, and $\g(\bid)\in \G$ such that
for all $r\in \Ocal'$
\begin{align}
F_{\infty,r} \bigl( &f|\heop{\aid^2}\bigr)(g) \\
\nonumber
& \= \sum_{ \bid\divides \aid^2\,,\; r\in \bid^{-1}\aid^2\Ocal' }
\nrm\bid\; \hat \ch\bigl( g_{\k(\bid)}p(\bid)
\bigr)^{-1} \; F_{\k(\bid),a(\bid)^{-2}r} f \bigl( p(\bid) g\bigr)\,.
\end{align}

If $\k(\bid)=\infty$, then either $\bid=\aid$, $a(\bid)=1$,
$b(\bid)=0$, and $\g(\bid)=1$, or $\bid\neq\aid$, and
$a(\bid)\in F^\ast\setminus \Ocal^\ast$.
\end{prop}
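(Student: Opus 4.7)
The plan is to assemble the proposition by combining Proposition~\ref{prop-Heop} with Lemmas~\ref{lem-fb} and~\ref{lem-fbi}; the proposition is essentially a packaging statement that reorganizes those results Fourier-term by Fourier-term.

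First I would start from the formula
\[
\bigl( f|\heop{\aid^2} \bigr)(g) \= \sum_{\bid\divides \aid^2} \hat\ch(g_\bid)^{-1}\, f_\bid(g), \qquad f_\bid(g) \= \sum_{\bt\in \bid\aid^{-2}/\bid^2\aid^{-2}} f\bigl( g_\bid n(\bt) g \bigr)\,,
\]
which is just Proposition~\ref{prop-Heop} with the notation of Lemma~\ref{lem-fb}. Since $f_\bid$ is left-invariant under $\GmN\infty$ (by Lemma~\ref{lem-fb}), so is $f|\heop{\aid^2}$, and we may take Fourier terms at~$\infty$ term by term:
\[
F_{\infty,r}\bigl( f|\heop{\aid^2}\bigr)(g) \= \sum_{\bid\divides \aid^2} \hat\ch(g_\bid)^{-1}\, F_{\infty,r} f_\bid(g)\,.
\]

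Next I would invoke Lemma~\ref{lem-fb} for each divisor~$\bid$: it produces a unique $\k(\bid)\in\Pcal$ (namely the representative of the $\G$-orbit of $g_\bid\,\infty$) together with $\g(\bid)\in\G$ and $p(\bid)=n\bigl(b(\bid)\bigr)\,h\bigl(a(\bid)\bigr)\in \SL_2(F)$ satisfying $g_\bid = \g(\bid)\, g_{\k(\bid)}\, p(\bid)$, and asserts that $F_{\infty,r}f_\bid$ vanishes unless $r\in \bid^{-1}\aid^2\Ocal'$, in which case it equals $\nrm\bid\,\ch\bigl(\g(\bid)\bigr)\, F_{\k(\bid),a(\bid)^{-2}r} f\bigl( p(\bid) g\bigr)$. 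Substituting gives
\[
F_{\infty,r}\bigl( f|\heop{\aid^2}\bigr)(g) \= \sum_{\substack{\bid\divides \aid^2\\ r\in \bid^{-1}\aid^2\Ocal'}} \nrm\bid\, \hat\ch(g_\bid)^{-1}\,\ch\bigl(\g(\bid)\bigr)\, F_{\k(\bid),a(\bid)^{-2}r} f\bigl(p(\bid)g\bigr)\,.
\]

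To match the stated formula I would then simplify the character factor using multiplicativity of $\hat\ch$ and the compatibility $\hat\ch|_{\G}=\ch$ (from the construction of $\hat\ch$ as an extension of $\ch$ in~\S\ref{sect-afag}). Writing $g_\bid=\g(\bid)\,g_{\k(\bid)}p(\bid)$ gives $\hat\ch(g_\bid)=\ch\bigl(\g(\bid)\bigr)\,\hat\ch\bigl(g_{\k(\bid)}p(\bid)\bigr)$, so the factor $\hat\ch(g_\bid)^{-1}\ch(\g(\bid))$ collapses to $\hat\ch(g_{\k(\bid)}p(\bid))^{-1}$, yielding the displayed formula of the proposition. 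Here one uses that $\aid$ is coprime to~$I$, which ensures that the relevant elements of $\SL_2(F)$ have components in~$\bar\G_\prm$ at every~$\prm\divides I$ so that $\hat\ch$ is defined on them.

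Finally, the concluding assertion about the case $\k(\bid)=\infty$ is exactly what Lemma~\ref{lem-fbi} states: under the equivalent hypothesis $g_\bid\in \bar\G\, h(a)$, one may take $\g(\bid)=1$, $b(\bid)=0$, $a(\bid)$ a generator of the ideal $\aid\bid^{-1}$, and this generator is a unit precisely when $\bid=\aid$. The main bookkeeping obstacle is simply to keep the character factor $\hat\ch$ consistent across the diagonal embedding $\SL_2(F)\hookrightarrow \SL_2(\ad_f)$ when identifying $\ch(\g(\bid))$ with $\hat\ch(\g(\bid))$; apart from that, every ingredient is already in place.
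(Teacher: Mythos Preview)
Your proposal is correct and follows essentially the same route as the paper: rewrite $f|\heop{\aid^2}$ as $\sum_{\bid\mid\aid^2}\hat\ch(g_\bid)^{-1}f_\bid$, apply Lemma~\ref{lem-fb} to each $f_\bid$, collapse $\hat\ch(g_\bid)^{-1}\ch(\g(\bid))$ to $\hat\ch(g_{\k(\bid)}p(\bid))^{-1}$, and invoke Lemma~\ref{lem-fbi} for the $\k(\bid)=\infty$ case. Your added remark about coprimality to~$I$ ensuring $\hat\ch$ is defined on the relevant $\SL_2(F)$-elements is a helpful clarification that the paper leaves implicit.
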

\begin{proof}Replacing the inner sum on the right hand side
of~\eqref{Ta2} in Proposition~\ref{prop-Heop} by $f_\bid$
in~\eqref{fb} in Lemma~\ref{lem-fb}, we obtain
\[ \bigl( f|\heop{\aid^2}\bigr) \= \sum_{\bid\divides \aid^2} \hat\ch(
g_\bid)^{-1}\; f_\bid(g)\,.\]
Applying Lemma~\ref{lem-fb} for each $\bid\divides \aid^2$ we obtain
$\k(\bid)$, $p(\bid)$ and $\g(\bid)$, now explicitly depending
on~$\bid$. For a given $\bid$ we obtain a non-zero contribution to
the Fourier term of order $r\in \Ocal'$ only if
$r\in \bid^{-1}\aid^2 \Ocal'$ (see \eqref{Ffb} in
Lemma~\ref{lem-fb}), and that contribution is
\begin{align*} \hat\ch( g_\bid)^{-1} \, \nrm\bid\,& \ch\bigl( \g(\bid)
\bigr)
\, F_{\k(\bid),a(\bid)^{-2}r}f\bigl( p(\bid)g\bigr) \\
&\= \nrm\bid\, \hat\ch\bigl(g_{\k(\bid)} p(\bid) \bigr)^{-1} \,
F_{\k(\bid),a(\bid)^{-2}r}f\bigl( p(\bid)g\bigr)\,.\end{align*}
The other statements follow from Lemma~\ref{lem-fbi}.
\end{proof}

We recall that we
defined cuspidality,  after Proposition~\ref{prop-Heop}, as the
vanishing of all Fourier terms $F_{\k,0}$ with $\k\in \Pcal_\ch$. So
the following proposition tells us that the Hecke operators
$T(\aid^2)$ preserve cuspidality.

\begin{prop}\label{prop-He0}Let $f$ be a $(\G,\ch)$-automorphic
continuous function on~$G$ for which $F_{\k,0}f=0$ for all
$\k\in \Pcal_\ch$. Then
\[ F_{\k,0}\bigl( f|\heop{\aid^2}\bigr)\=0\qquad\text{ for all }\k\in
\Pcal_\ch\,,\]
for each non-zero ideal $\aid $ in $\Ocal$ that is relatively prime
to~$I$.
\end{prop}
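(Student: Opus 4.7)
The plan is to pass to the adele picture of~\S\ref{sect-afag}, where the Hecke operator acts by right convolution on $\SL_2(\ad_f)$ while each constant Fourier term is an integral of $f_a$ over the compact abelian quotient $\ad/F$; since these two operations involve independent variables, a direct Fubini interchange supplies the result.

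For each cusp $\k\in\Pcal$ I would introduce the adelic constant term
\[
c_\k(f_a)(g)\;\isdef\;\int_{\ad/F} f_a\bigl(g_\k n(x) g_\k^{-1}\, g\bigr)\, dx,
\]
well defined because $\ad/F$ is compact. Using the left-$\SL_2(F)$-invariance of $f_a$ and the relation between $\ch$ and $\hat\ch$ set up in~\S\ref{sect-afag}, one checks that $c_\k(f_a)$ coincides, up to a nonzero normalization, with the classical term $F_{\k,0}f$ when $\k\in\Pcal_\ch$, and vanishes automatically when $\k\notin\Pcal_\ch$ because $\hat\ch$ is then non-trivial on the relevant unipotent fibres. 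Thus the cuspidality hypothesis is equivalent to $c_\k(f_a)\equiv 0$ for every cusp representative. Since $\heop{\aid^2}$ is a compactly supported function on $\SL_2(\ad_f)$, Fubini applied to~\eqref{conv1} gives
\[
c_\k\bigl(f_a * \heop{\aid^2}\bigr)(g)\;=\;\int_{\SL_2(\ad_f)} \heop{\aid^2}(y)\, c_\k(f_a)\bigl(g y^{-1}\bigr)\, dy\;=\;0,
\]
and translating back to the classical side yields $F_{\k,0}(f|\heop{\aid^2})=0$ for every $\k\in\Pcal_\ch$.

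As a classical sanity check for $\k=\infty$, Proposition~\ref{prop-Fourterm} with $r=0$ writes $F_{\infty,0}(f|\heop{\aid^2})$ as a finite sum of terms proportional to $F_{\k(\bid),0}f\bigl(p(\bid)g\bigr)$, and the computation inside the proof of Lemma~\ref{lem-fb} already records that $\ch$ is trivial on $\GmN{\k(\bid)}$, so every $\k(\bid)\in\Pcal_\ch$ and the sum vanishes term by term by hypothesis. The only real obstacle in the general case is the bookkeeping needed to identify classical and adelic constant terms precisely enough to transfer the vanishing back to the classical side; all supporting ingredients---compactness of $\ad/F$, strong approximation, and the convolution description of~$\he^I$---are already in place in~\S\ref{sect-afag}.
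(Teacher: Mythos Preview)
Your argument is correct, and it takes a genuinely different route from the paper's. The paper stays entirely on the classical side: it writes $F_{\k,0}\bigl(f|\heop{\aid^2}\bigr)$ as a finite sum $\int\sum_j c_j\,f(h_j n(x)g)\,dx$ with $h_j\in\SL_2(F)$, decomposes each $h_j=\g_j g_{\k_j}n(b_j)h(a_j)$, passes to a common sublattice $\Ld\subset M_\k$ so that the individual terms become periodic, and then identifies each piece with an integral $\int_{\RR^d/\hat M_{\k_j}} f(g_{\k_j}n(x)\cdot)\,dx$ that vanishes either by hypothesis (if $\k_j\in\Pcal_\ch$) or automatically (if $\k_j\notin\Pcal_\ch$, since $\ch$ is then nontrivial on $\GmN{\k_j}$). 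Your adelic approach trades this lattice bookkeeping for the single observation that the unipotent integral over $\ad/F$ on the left commutes with the convolution on $\SL_2(\ad_f)$ on the right.

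What each buys: your route is conceptually cleaner and makes the invariance of cuspidality under all of $\he^I$ transparent in one line. The price is the step you flag yourself: the assertion that the hypothesis forces $c_\k(f_a)\equiv 0$ on \emph{all} of $\SL_2(\ad)$, not just on $G\times\{1\}$. After Fubini you need $c_\k(f_a)(g y^{-1})=0$ for $y^{-1}$ ranging over $\supp\heop{\aid^2}$, and these adelic points correspond via strong approximation to classical constant terms at potentially \emph{every} cusp class, not only at the original~$\k$. So your ``one checks'' is exactly the adelic/classical dictionary for constant terms across all cusps; it is standard, but it is the substantive content of the argument and deserves one explicit line (e.g.\ writing $g_f=c\,u$ with $c\in\SL_2(F)$, $u\in\bar\G$, and reducing $c_\infty(f_a)(g_\infty,g_f)$ to a classical constant term at the cusp $c\cdot\infty$). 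The paper's hands-on proof in effect carries out this dictionary term by term without naming it.
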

\begin{proof}Let $f$ and $\aid$ be as in the proposition. Consider
$\k\in \Pcal_\ch$. Then
\begin{align*}
F_{\k,0} \bigl(f|\heop{\aid^2}\bigr)(g) &\= \sum_{\bid\divides \aid}
\frac{\hat\ch(g_\bid)^{-1}}{\vol(\RR^d/M_{\k})}
\\
&\qquad\hbox{} \cdot
\int_{\RR^d/M_{\k}} \sum_{\bt \in \bid\aid^{-2}/\bid^2\aid^{-2}} f(
g_\bid n(\bt) g_{\k} n(x)g)\, dx\displaybreak[0]\\
&\= \int_{\RR^d/M_{\k} } \sum_j c_j \,f(h_j n(x)\, g)\,dx \,,
\end{align*}
where $h_j$ runs over a finite set of elements of $\SL_2(F)$, and
$c_j\in \CC$. We note that the function
$x\mapsto \sum_j c_j f(h_j n(x)\, g)$ on $\RR^d$ is $M_\k$-periodic,
but the individual terms may not be $M_\k$-periodic. Since the
finitely many $h_j$ are all in $\SL_2(F)$, there is a fractional
ideal $\Ld_0$ with finite index in $M_\k$ such that the individual
terms are $\Ld_0$-periodic.

Write $h_j=\g_j g_{\k_j} n(b_j) h(a_j)$ with $g_j \in \G$,
$\k_j\in\Pcal$ and $p_j =\matc {a_j} {b_j}0{1/a_j} \in \SL_2(F)$.
Then for all fractional ideals $\Ld$ in~$\Ld_0$ we have
\begin{equation}
\label{ih}
\begin{aligned}
\int_{\RR^d/\Ld} &f\bigl(h_j n(x) g\bigr)\, dx \= \ch(\g_j)
\int_{\RR^d/\Ld} f\bigl(g_{\k_j} n(b_j)h(a_j)n(x)g\bigr)\, dx\\
&\= \ch(\g_j)\, \bigl|\nrm{a_j}\bigr| \, \int_{\RR^d/a_j\Ld} f \bigl(
g_{\k_j} n(x)h(a_j)g\bigr)\, dx\,.
\end{aligned}
\end{equation}

For each $\k'\in \Pcal$ the fractional ideal $M_{\k'}$ contains a
fractional ideal $\hat M_{\k'}$ on which $\ch$ is trivial. (For
$\k'\in \Pcal_\ch$ we can take $\hat M_{\k'}=M_{\k'}$.) The
assumptions of the proposition imply that for all $g\in G$:
\[ \int_{\RR^d/\hat M_{\k'}} f\bigl( g_{\k'}n(x)g\bigr)\, dx\=0\,.\]
Taking for $\Ld\subset\Ld_0$ a non-zero ideal in ~$\Ocal$ divisible by
all primes that contribute denomintors of the~$a_j$, we can arrange
that $a_j \Ld \subset \hat M_{\k_j}$ for all $j$. Thus, we conclude
that the integral in~\eqref{ih} vanishes. So $F_{\k,0} \bigl(
f|\heop{\aid^2} \bigr)=0$.
\end{proof}

\subsection{Action on cusp forms} We turn to the action of the Hecke
operators on the cuspidal space
$L_\x^{2\mathrm{cusp}}(\G\backslash G,\ch)$, and define the
eigenvalues of Hecke operators that occur in Theorem~\ref{thm-afH}.
\medskip

Since all $\heop{\aid^2}$ act as self-adjoint
bounded operators and they all
commute with the Casimir operators $C_j$, $j=1,\ldots,d$, we can
arrange the orthogonal system $\{ V_\varpi\}$ of irreducible cuspidal
subspaces so that $V_\varpi|\he \subset V_\varpi$ for each~$\varpi$.
Hence we have for each $\varpi$ a character $\ch_\varpi$ of~$\he$
that gives the eigenvalue of the Hecke operators on the irreducible
space~$V_\varpi$. Since the convolution operator determined by
$\heop{\aid^2}$ is symmetric, the value
$\ch_\varpi\bigl(\heop{\aid^2}\bigr)$ is real for all $\aid$ prime
to~$I$. By \eqref{normbd}, \eqref{Dtdecom} and \eqref{Ta2d} we have
\begin{equation}
\left| \ch_\varpi\bigl( \heop{\aid^2}\bigr)
\right|\leq \prod_{\prm\in P} \#\bigl( \bar\G_\prm \backslash
\Dt(\prm^{2k_\prm})
\bigr) \= \prod_{\prm\in P} \sum_{j=0}^{2k} \nrm\prm^j\, .
\end{equation}
In \eqref{nuprm} we have assigned a parameter $\nu_\prm$ to each
character of a local Hecke algebra $\he_\prm$. Thus, to $\ch_\varpi$
corresponds, at the place $\prm$ outside~$I$, a parameter
\begin{equation}
\nu_{\varpi,\prm} \in i\biggl[ 0,\frac{\pi}{2\log \nrm\prm} \biggr]
\cup \biggl( 0,\frac12\biggr]\,.
\end{equation}
By \eqref{Tpk} we have
\begin{equation}
\ch_\varpi\bigl( \heop{\prm^{2k}}\bigr) \= \nrm\prm^k \sum_{j=0}^{2k}
\nrm\prm^{2(k-j)\nu_{\varpi,\prm}}\,,
\end{equation}
which corresponds to
\begin{equation}
\label{lbd}
\ld_{\varpi,\prm}^2 + \nrm \prm \;\in\; \bigl[ 9,1+\nrm \prm +
\nrm\prm^2\bigr]\;.
\end{equation}
With this choice, the parameter $\ld_{\varpi,\prm}$ in~\eqref{ldprm}
in the introduction, defined so that $\ld_{\varpi,\prm}^2 - \nrm\prm$
is the eigenvalue of $\heop{\prm^2}$ on $V_\varpi$, is given by
\begin{equation}\label{ldpbd}
\ld_{\varpi,\prm} \= \sqrt{\nrm\prm}\, \bigl(
\nrm\prm^{\nu_{\varpi,\prm}} + \nrm\prm^{-\nu_{\varpi,\prm}} \bigr)
\in [0,1+\nrm\prm]\,.
\end{equation}
We use $\ld_{\varpi,\prm}$ as the main quantity. Note that if the
operator $\heop\prm$ can be defined, the system $\bigl\{ V_\varpi
\bigr\}$ can be rearranged so that $\heop\prm$ acts by $\pm
\ld_{\varpi,\prm} \cdot\mathrm{Id}$
on~$V_\varpi$.

If $\aid=\prod_{\prm\in P} \prm^{k_\prm}$ is prime to~$I$ then
\begin{equation}\label{chpiA}
\ch_\varpi\bigl( \heop{\aid^2} \bigr) \= \prod_{\prm \in P}
S\lw{\prm,2k_\prm}( \ld_{\varpi,\prm})\,,
\end{equation}
where $S\lw{\prm,2k}$ is the only even polynomial of degree~$2k$ such
that
\begin{equation}
S\lw{\prm,2k}\bigl(\sqrt{\nrm\prm}\, (X+X^{-1} \bigr) \= \nrm\prm^k
\sum_{j=0}^{2k} X^{2(k-j)}\,.
\end{equation}

Now we are in a position to give the relation between the eigenvalues
$\ch_\varpi\bigl(\heop{\aid^2}\bigr)$ and the Fourier coefficients of
the cuspidal automorphic representation~$\varpi$:
\begin{thm}\label{thm-ew-pi}Let $\aid$ be a non-zero integral ideal
in~$\Ocal$ relatively prime to~$I$, and let
$r\in \Ocal'\setminus\{0\}$. With the notations of
Proposition~\ref{prop-Fourterm} we have for each irreducible cuspidal
space $V_\varpi $ invariant under the Casimir operators and the Hecke
operators $\heop{\prm^2}$ with $\prm\dividesnot I$ the following
relation for each non-zero ideal $\aid$ in $\Ocal$ prime to~$I$:
\begin{align}\label{Ta2c}
 \ch_\varpi\bigl(&\heop{\aid^2}\bigr) c^{\infty,r}(\varpi) \=
 \sum_{\bid\divides \aid^2\,,\; r\in \bid^{-1}\aid^2\Ocal'} \nrm\bid\;
 \hat\ch\bigl(g_{\k(\bid)} p(\bid)\bigr)^{-1}\, |\nrm{a(\bid)}|\\
\nonumber
&\qquad\hbox{} \cdot
\left( \prod_j\sign\bigl(a_j(\bid)
\bigr)^{\x_j} \right) \, e^{2\pi i S(r b(\bid)/a(\bid)^3)}
\,c^{\k(\bid),r/a(\bid)^2}(\varpi)\,.
\end{align}
\end{thm}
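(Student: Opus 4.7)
The identity asserts, for a cuspidal automorphic representation $V_\varpi$ preserved by $\heop{\aid^2}$, that the Hecke eigenvalue $\ch_\varpi(\heop{\aid^2})$ acts on the Fourier coefficient $c^{\infty,r}(\varpi)$ as a twisted sum of Fourier coefficients $c^{\k(\bid),r/a(\bid)^2}(\varpi)$ at other cusps. The strategy is to combine two expressions for $F_{\infty,r}(\ps_{\varpi,q}|\heop{\aid^2})(g)$, one coming from the fact that $\heop{\aid^2}$ acts on $V_\varpi$ by the scalar $\ch_\varpi(\heop{\aid^2})$, and the other coming from the cusp-expansion already proved in Proposition~\ref{prop-Fourterm}.

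First, I would fix a $K$-finite vector $\ps_{\varpi,q} \in V_\varpi$ of some weight $q\in \ZZ^d$ that occurs in $V_\varpi$. On one hand, $\ps_{\varpi,q}|\heop{\aid^2} = \ch_\varpi(\heop{\aid^2})\,\ps_{\varpi,q}$, so by the normalization~\eqref{Fourc-def},
\[
F_{\infty,r}\bigl(\ps_{\varpi,q}|\heop{\aid^2}\bigr)(g) \= \ch_\varpi(\heop{\aid^2})\, c^{\infty,r}(\varpi)\, d^r(q,\nu_\varpi)\, W_q(r,\nu_\varpi)(g).
\]
On the other hand, Proposition~\ref{prop-Fourterm} expresses the same quantity as
\[
\sum_{\substack{\bid\divides \aid^2\\ r\in \bid^{-1}\aid^2\Ocal'}} \nrm\bid\; \hat\ch\bigl(g_{\k(\bid)} p(\bid)\bigr)^{-1}\, F_{\k(\bid),\,a(\bid)^{-2}r}\ps_{\varpi,q}\bigl(p(\bid)\,g\bigr),
\]
and each term is rewritten via~\eqref{Fourc-def} as
\[
c^{\k(\bid),\, r/a(\bid)^2}(\varpi)\; d^{r/a(\bid)^2}(q,\nu_\varpi)\; W_q\bigl(r/a(\bid)^2,\nu_\varpi\bigr)\bigl(p(\bid)\,g\bigr).
\]
Equating the two expressions and cancelling the common factor $d^r(q,\nu_\varpi) W_q(r,\nu_\varpi)(g)$, which is nonzero for $r\ne 0$, will yield the claimed identity provided the ratio
\[
R(\bid,g) \;=\; \frac{d^{r/a(\bid)^2}(q,\nu_\varpi)\, W_q\bigl(r/a(\bid)^2,\nu_\varpi\bigr)\bigl(p(\bid)g\bigr)}{d^r(q,\nu_\varpi)\, W_q(r,\nu_\varpi)(g)}
\]
equals $|\nrm{a(\bid)}|\,\bigl(\prod_j \sign(a_j(\bid))^{\x_j}\bigr)\,e^{2\pi i S(r\,b(\bid)/a(\bid)^3)}$.

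The heart of the proof is computing this ratio. I would proceed in three pieces. The translation by $n(b(\bid))$ on the left of the Whittaker function of parameter $r/a(\bid)^2$ yields a phase factor from the $N$-transformation law; the translation by $h(a(\bid))$, together with the relation $h(a)n(x) = n(a^2 x)h(a)$ and the explicit form of $W_q$ given in formula~\emph{(2.12)} of~\cite{BM9}, produces the absolute norm $|\nrm{a(\bid)}|$ (from the $A$-part of the Iwasawa decomposition) and the sign factor $\prod_j \sign(a_j(\bid))^{\x_j}$ (arising because $h(a)$ with negative components interacts with the central character $\x$ via the compatibility condition~\eqref{ch-x-comp}). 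The remaining change in the first argument of $W_q$ from $r/a^2$ back to $r$ is absorbed by the ratio $d^{r/a^2}(q,\nu_\varpi)/d^r(q,\nu_\varpi)$ of normalizing gamma-exponential prefactors~\emph{(2.28)}, leaving the phase $e^{2\pi i S(r b(\bid)/a(\bid)^3)}$ after bookkeeping with the exact parametrization of $p(\bid)$.

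\textbf{Main obstacle.} The computational bookkeeping for $R(\bid,g)$ is the only delicate step: one must track precisely how $d^r$ and $W_q(r,\nu)$ scale under the substitution $r\mapsto r/a^2$ and simultaneous left-translation by $p(\bid)=n(b(\bid))h(a(\bid))$, and verify that the powers of $|a_j|$, the signs, and the phase exponent combine exactly as stated. All other ingredients---scalar action of $\heop{\aid^2}$ on $V_\varpi$, the expansion in Proposition~\ref{prop-Fourterm}, and non-vanishing of $W_q(r,\nu_\varpi)$ for $r\ne 0$---are already in place.
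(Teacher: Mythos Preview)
Your proposal is correct and follows essentially the same approach as the paper: fix a weight-$q$ vector $\ps_{\varpi,q}$, equate the two expressions for $F_{\infty,r}(\ps_{\varpi,q}|\heop{\aid^2})$ coming from the scalar action and from Proposition~\ref{prop-Fourterm}, and then reduce to a Whittaker-function identity. The paper dispatches your ratio $R(\bid,g)$ in one line by citing formulas (1.12) and (2.28) of~\cite{BM9}, which encode exactly the $N$-phase, the $|\nrm{a(\bid)}|$ factor, and the sign product you describe.
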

\begin{proof}We pick a weight $q$ occurring in~$V_\varpi$ and
use~\eqref{Fourc-def}:
\begin{align*}
\ch_\varpi\bigl(&\heop{\aid^2}\bigr) \,c^{\infty,r}(\varpi)\,
d^r(q,\nu_\varpi)\, W_q(r,\nu_\varpi;g)\displaybreak[0]\\
&\= \sum_{\bid\divides \aid^2\,,\; r\in \bid^{-1}\aid^2\Ocal'}
\nrm\bid\; \hat\ch\bigl(g_{\k(\bid)} p(\bid)\bigr)^{-1} \,
c^{\k(\bid),r/a(\bid)^2}(\varpi)\; d^{r/a(\bid)^2}(q,\nu_\varpi)\\
&\qquad\hbox{} \cdot
 W_q(r\, a(\bid)^{-2},\nu_\varpi; p(\bid)
 g)\,.\end{align*}
Formulas (1.12) and (2.28) in~\cite{BM9} imply that this is equal to
\begin{align*}
 & \sum_{\bid\divides \aid^2\,,\; r\in \bid^{-1}\aid^2\Ocal'}
 \nrm\bid\; \hat\ch\bigl(g_{\k(\bid)} p(\bid)\bigr)^{-1} \,
 c^{\k(\bid),r/a(\bid)^2}(\varpi)\, \bigl|\nrm{a(\bid)})\bigr|
\\
&\qquad\hbox{} \cdot d^r(q,\nu_\varpi) \, e^{2\pi i
S(rb(\bid)/a(\bid)^3)}\, W_q(r, \nu_\varpi; g)\,
\prod_j\sign\bigl(a_j(\bid)\bigr)^{\x_j}\,.
\end{align*}
This yields statement \eqref{Ta2c} in the theorem.
\end{proof}
\begin{rmk}This theorem generalizes the classical relation between
eigenvalues of Hecke operators $T_p$ with $p$ prime on a cuspidal
eigenform and the Fourier coefficient of order~$p$ of that form. In
the classical context one uses the normalization of the eigenform by
taking its Fourier coefficient of order~$1$ equal to~$1$. This
normalization does not extend to the present situation in a
straightforward way, since there is in general no obvious Fourier
term order $r$ in $\Ocal'\setminus\{0\}$ to play the role of~$1$.
Hence we give a formulation in which $r\in \Ocal'\setminus\{0\}$ can
be chosen freely.
\end{rmk}

\subsection{Proof of Theorem~\ref{thm-afH}} In this subsection, we
give a proof of Theorem~\ref{thm-afH} in two steps. First we prove
Proposition~\ref{prop-asfp}, which is a version of
Theorem~\ref{thm-afH} with the characteristic functions of the
intervals $J_\prm$ replaced by polynomials. Next, \S\ref{sect-extas}
gives the extension of this result to characteristic functions of
intervals.

\subsubsection{Asymptotic formula for polynomials}
\begin{lem}Let $P$ be a finite set of primes of~$F$. Then there are
elements $r\in \Ocal'$ such that $r\not\in \prm \Ocal'$ for all
$\prm\in P$.
\end{lem}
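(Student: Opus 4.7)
The plan is to reduce the lemma to a straightforward application of the Chinese Remainder Theorem for the fractional $\Ocal$-ideal $\Ocal'$. The key observation is that $\Ocal'$, being a non-zero fractional ideal of the Dedekind domain $\Ocal$, is a rank-$1$ projective $\Ocal$-module, so for any prime $\prm\subset \Ocal$ the quotient $\Ocal'/\prm\Ocal'$ is a one-dimensional vector space over $\Ocal/\prm$, and in particular non-zero.

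First, I would recast the condition $r\notin\prm\Ocal'$ in valuation-theoretic terms. Writing $\Ocal' = \prod_\qrm \qrm^{e_\qrm}$ (with $e_\qrm\in\ZZ$, only finitely many nonzero), we have $r\in \Ocal'$ iff $v_\qrm(r)\ge e_\qrm$ for every finite prime $\qrm$, and $r\in\prm\Ocal'$ iff additionally $v_\prm(r)\ge e_\prm+1$. Thus the lemma asks for $r\in F$ with
\[ v_\prm(r) = e_\prm \quad\text{for each }\prm\in P,\qquad v_\qrm(r)\ge e_\qrm\quad\text{for every }\qrm. \]

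Next, I would produce such $r$ by the Chinese Remainder Theorem. The ideals $\{\prm\Ocal'\}_{\prm\in P}$ are pairwise comaximal inside $\Ocal'$ (their associated prime ideals of $\Ocal$ are distinct), so the natural map
\[ \Ocal' \;\longrightarrow\; \bigoplus_{\prm\in P} \Ocal'/\prm\Ocal' \]
is surjective. Since each target summand is non-zero, one may pick $r\in\Ocal'$ whose image in every factor is non-zero; this $r$ satisfies $r\notin\prm\Ocal'$ for every $\prm\in P$, as required.

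There is no serious obstacle: the statement is purely a Dedekind-domain / weak-approximation fact, and the only point to check is the one-dimensionality of $\Ocal'/\prm\Ocal'$, which is immediate from $\Ocal'$ being an invertible $\Ocal$-module. The lemma itself is essentially a bookkeeping step needed so that the Fourier coefficient $c^{\infty,r}(\varpi)$ appearing in Theorem~\ref{thm-ew-pi} is not forced to vanish for trivial divisibility reasons at the primes in $P$.
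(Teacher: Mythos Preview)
Your proof is correct and is essentially the same as the paper's: the paper applies the Chinese Remainder Theorem to the fractional ideal $\aid=\Ocal'$, writing $\aid/\bigl(\aid\prod_{\prm\in P}\prm\bigr)\cong\prod_{\prm\in P}\prm^{a_\prm}/\prm^{a_\prm+1}$ and picking an element with nonzero image in each factor, which is exactly your surjection $\Ocal'\to\bigoplus_{\prm\in P}\Ocal'/\prm\Ocal'$ together with the nonvanishing of each summand.
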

\begin{proof}
For any fractional ideal $\aid$ with prime decomposition
$\aid=\prod_\prm \prm^{a_\prm}$ the quotient $\aid/\bigl( \aid
\prod_{\prm\in P} \prm\bigr) \cong \prod_{\prm\in P}
\bigl(\prm^{a_\prm}/\prm^{a_\prm+1}\bigr)$ contains elements $x$ such
that $x_\prm + \prm^{a_\prm+1}\neq \prm^{a_\prm+1}$ for all
$\prm\in P$.
\end{proof}

\begin{prop}\label{prop-asfp}Let $t\mapsto \Om_t$ be a family of sets
in~$\RR^d$ as in~\eqref{Om-gen}. Let $P$ be a finite set of primes
not dividing~$I$, and let $\ld_{\varpi,\prm}$ and $\Phi$ be as in
\eqref{ldpbd} and \eqref{Phi-def} respectively.

Then, for any choice of even polynomials $q_\prm$, $\prm\in P$, and
any $r\in \Ocal'$ such that $r\not\in \prm\Ocal'$ for each
$\prm\in P$, we have as $t\rightarrow\infty$:
\begin{align}\label{asfp}
\sum_{\varpi,\, \ld_\varpi\in \Om_t} \bigl|
&c^{\infty,r}(\varpi)\bigr|^2 \,\prod_{\prm\in P}
q_\prm(\ld_{\varpi,\prm})\\
\nonumber
& \= \frac{2 \sqrt{|D_F|} \, \vol(\G\backslash G)}{(2\pi)^d}\,
\pl(\Om_t)\, \prod_{\prm\in P} \Phi_\prm(q_\prm) \,+\, o\bigl(
V\lw1(\Om_t)
\bigr)\,.
\end{align}
\end{prop}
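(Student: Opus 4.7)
Both sides of~\eqref{asfp} are multilinear in the tuple $(q_\prm)_{\prm\in P}$, and the polynomials $\{S_{\prm,2k}\}_{k\geq 0}$ form a basis of the space of even polynomials in one variable. So it suffices to verify~\eqref{asfp} when $q_\prm = S_{\prm,2k_\prm}$ for each $\prm\in P$. Setting $\aid = \prod_{\prm\in P}\prm^{k_\prm}$, identity~\eqref{chpiA} gives $\prod_{\prm}q_\prm(\ld_{\varpi,\prm}) = \ch_\varpi(\heop{\aid^2})$. On the right-hand side, the substitution $\ld = 2\sqrt{\nrm\prm}\cos\th$ converts $S_{\prm,2k}(\ld)$ into $\nrm\prm^k U_{2k}(\cos\th)$ with $U_{2k}$ the Chebyshev polynomial of the second kind, and orthogonality of the $U_m$ against $\sin^2\th\,d\th$ yields $\Phi_\prm(S_{\prm,2k}) = \dt_{k,0}$. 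Thus $\prod_\prm\Phi_\prm(q_\prm)$ vanishes unless $\aid = \Ocal$, and the case $\aid = \Ocal$ is precisely~\eqref{as}. The proposition therefore reduces to showing, for $\aid\neq\Ocal$,
\[ \sum_{\varpi,\,\ld_\varpi\in\Om_t} |c^{\infty,r}(\varpi)|^2\,\ch_\varpi\bigl(\heop{\aid^2}\bigr) \= o\bigl(V\lw1(\Om_t)\bigr). \]

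I next apply Theorem~\ref{thm-ew-pi} and multiply by $\overline{c^{\infty,r}(\varpi)}$ to expand
\[ |c^{\infty,r}(\varpi)|^2\,\ch_\varpi\bigl(\heop{\aid^2}\bigr) \= \sum_{\bid\divides\aid^2,\; r\in\bid^{-1}\aid^2\Ocal'} E_\bid(r)\,\overline{c^{\infty,r}(\varpi)}\,c^{\k(\bid),\,r/a(\bid)^2}(\varpi), \]
with $E_\bid(r)$ the explicit scalar prefactor read off from~\eqref{Ta2c}. The hypothesis $r\not\in\prm\Ocal'$ for every $\prm\in P$ says that the $\prm$-adic valuation satisfies $v_\prm(r) = v_\prm(\Ocal')$, so the condition $r\in\bid^{-1}\aid^2\Ocal'$ translates into $v_\prm(\bid)\geq 2k_\prm$ for each $\prm\in P$. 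Together with $\bid\divides\aid^2$ (so $v_\prm(\bid)\leq 2k_\prm$), this forces $\bid = \aid^2$, and only a single term remains.

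For this surviving term, $\aid\bid^{-1} = \aid^{-1}$, so by Lemma~\ref{lem-fbi} the cusp $\k(\aid^2)$ equals $\infty$ iff $\aid$ is a principal ideal. If $\aid$ is not principal, the inner sum pairs Fourier coefficients at two $\G$-inequivalent cusps. If $\aid=(\al)$ is principal with $\al\in F^\ast\setminus\Ocal^\ast$ (forced by $\aid\neq\Ocal$), then $a(\aid^2)=\al^{-1}$ and the target order is $r\al^2$, which is not in the orbit of $r$ under the stabilizer of $\infty$ in $\G$ since $\al^2$ is not the square of a unit. In either subcase, Theorem~\ref{thm-af'} (the generalization of~\eqref{as} to mixed products $\overline{c^{\k_1,r_1}(\varpi)}\,c^{\k_2,r_2}(\varpi)$ at possibly different cusps and orders) produces no diagonal main term and yields the bound $o(V\lw1(\Om_t))$. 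Assembling the single surviving $\bid$-term then completes the proof. The main obstacle is confirming the vanishing of the main term inside Theorem~\ref{thm-af'}: one must check that when $(\infty,r)$ and $(\k(\aid^2),r/a(\aid^2)^2)$ are $\G$-inequivalent pairs, the Kloosterman-type off-diagonal in the sum formula (Theorem~\ref{thm-sf}) is indeed absorbed into the $o(V\lw1(\Om_t))$ error, so that no hidden diagonal contribution arises from accidental coincidences between the two cusps.
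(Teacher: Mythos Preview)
Your proof is correct and follows essentially the same route as the paper: reduce to $q_\prm=S_{\prm,2k_\prm}$, expand via Theorem~\ref{thm-ew-pi}, and apply Theorem~\ref{thm-af'} to each $\bid$-summand. The only cosmetic difference is that you invoke the hypothesis $r\not\in\prm\Ocal'$ \emph{before} applying Theorem~\ref{thm-af'} (so only $\bid=\aid^2$ survives, with vanishing main term), whereas the paper first derives the general formula~\eqref{a-dis-r} for arbitrary~$r$ (where the surviving main term comes from $\bid=\aid$) and specializes afterward; your closing worry about hidden diagonal contributions is unnecessary, since the statement of Theorem~\ref{thm-af'} already absorbs the Kloosterman term into $o\bigl(V\lw1(\Om_t)\bigr)$.
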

\begin{proof}Let $\aid=\prod_{\prm\in P}\prm^{k_\prm}$ with
$k_\prm\geq 0$. The sum in~\eqref{Ta2c} is finite, so
Theorems~\ref{thm-ew-pi} and~\ref{thm-af'} give for any
$r\in \Ocal'\setminus\{0\}$:
\begin{align*}
\sum_{\varpi\,,\; \ld_\varpi\in \Om_t} & \ch_\varpi\bigl(\heop{\aid^2}
\bigr) \,\bigl| c^{\infty,r}(\varpi )\bigr|^2 \= \sum_{\bid\divides
\aid^2,\, r\in \bid^{-1}\aid^2\Ocal'} \nrm\bid\; \hat
\ch\bigl(g_{\k(\bid)}p(\bid)\bigr)^{-1} \;|\nrm{a(\bid)}|\\
\nonumber
&\qquad\hbox{} \cdot
 \left( \prod_j \sign\bigl(a_j(\bid)
 \bigr)^{\x_j} \right)\, e^{2\pi i S(r\,b(\bid)/a(\bid)^3)}\,
\dt_{\infty,\k(\bid) }\, \dt_\k(r,r/a(\bid)^2)\\
\nonumber
&\qquad\hbox{} \cdot
\frac{2\sqrt{|D_F|}\, \vol(\G\backslash G)}{(2\pi)^d}\, \pl(\Om_t)
+ o\bigl( V\lw1(\Om_t) \bigr)\,.
\end{align*}
In the terms in the sum the factor $\dt_{\infty,\k(\bid)}$ is nonzero
(and equal to $1$) only if $\k(\bid)=\infty$. Then either $\bid=\aid$
and $a(\bid)=1$, or $a(\bid)\in F^\ast \setminus \Ocal^\ast$, by
Lemma~\ref{lem-fbi}. Then we see in \S\ref{sss-dt} that
$\dt_\k\bigl(r,r/a(\bid)^2\bigr)=0$ if $a(\bid)\not\in \Ocal^\ast$,
and $\dt_\k\bigl(r,r/a(\bid)^2\bigr)=1$ in the case $\bid=\aid$.
 Thus, we arrive at
\begin{align}\label{a-dis-r}
\sum_{\varpi\,,\; \ld_\varpi\in \Om_t} & \ch_\varpi\bigl(\heop{\aid^2}
\bigr) \,\bigl| c^{\infty,r}(\varpi )\bigr|^2 \\
\nonumber
&\qquad\hbox{} \=
\begin{cases}\nrm\aid\; \frac{2\sqrt{|D_F|}\, \vol(\G\backslash
G)}{(2\pi)^d}\, \pl(\Om_t)
&\text{ if } r\in \aid\,\Ocal'\\
0&\text{ otherwise}\end{cases}\\
\nonumber
&\qquad\qquad\hbox{}+ o\bigl(V\lw1(\Om_t)
\bigr)\,.
\end{align}

We use \eqref{chpiA}, and note that $S_{\prm,0}=1$, to find
\begin{align}\label{Samsp}
&\sum_{\varpi,\, \ld_\varpi\in \Om_t} \bigl|
c^{\infty,r}(\varpi)\bigr|^2 \prod_{\prm\in P} S\lw{\prm,2k_\prm}
(\ld_{\varpi,\prm})
\displaybreak[0]\\
\nonumber
&\qquad\hbox{} \=
\begin{cases}
 \frac{2\, \sqrt{|D_F|}\, \vol(\G\backslash G)}{(2\pi)^d} \pl(\Om_t)
 \prod_{\prm\in P} \nrm\prm^{k_\prm}
&\text{ if } r\in \Ocal'\cdot \prod_{\prm\in P}\prm^{k_\prm}\\
0&\text{ otherwise}
\end{cases}\\ \nonumber \\
\nonumber
&\qquad\qquad\hbox{}+ o\bigl(V\lw1(\Om_t)
\bigr)\,.
\end{align}
We have to consider this for $r\in \Ocal'$ such that $r\not\in \prm
\Ocal'$ for any $\prm\in P$. That means that we obtain only the term
$o\bigl( V\lw1(\Om_t)
\bigr)$ except in the case that all $k_\prm$ vanish.

A computation shows that the measure $\Phi_\prm$ in \eqref{Phi-def}
satisfies
\begin{equation} \Phi_\prm( S\lw{\prm,2k} ) \=
\begin{cases}
1&\text{ if } k=0\,,\\
0&\text{ otherwise}\,.
\end{cases}
\end{equation}
So we can replace the right hand side in~\eqref{Samsp} by that
in~\eqref{asfp} with $q_\prm = S\lw{\prm,2k}$. Since the
$S\lw{\prm,2k}$, $k\geq 0$, form a basis of the even polynomials in
$X_\prm$ , we have completed the proof.
\end{proof}

\subsubsection{Asymptotic formula for characteristic
functions}\label{sect-extas} We complete the proof of
Theorem~\ref{thm-afH} by extension steps also used in \cite{BMP3a}
and~\cite{densII}.

For the families under consideration we can have $\pl(\Om_t)=0$ for
all $t$. That occurs if $E\neq \emptyset$ in~\eqref{Om-gen}, and
$\pl\bigl([A_j,B_j]\bigr)=0$ for at least one $j\in E$. In this case,
equation \eqref{afH} follows directly from~\eqref{asf}.

For all other families under consideration we have
$\pl(\Om_t)\rightarrow\infty$. Then we may view \eqref{asfp} as a
limit formula
\begin{equation}\label{lasf}
\lim_t \mu_t( p ) \= \mu(p)\,,
\end{equation}
for positive measures on the compact space $X_P = \prod_{\prm\in P}
\bigl[ 0,1+\nobreak \nrm\prm\bigr]$:
\begin{align}
\mu_t(f) &\= \frac1{\pl(\Om_t)}\, \sum_{\varpi,\, \ld_\varpi \in
\Om_t} \bigl| c^{\infty,r}(\varpi)
\bigr|^2 f\bigl( (\ld_{\varpi,\prm})_{\prm\in
P}\bigr)\,,\displaybreak[0]\\
\nonumber
\mu(f)&\= \frac{2\sqrt{|D_F|}\, \vol(\G\backslash G)}{(2\pi)^d} \Bigl(
\mathop{\textstyle\bigotimes}_{\prm\in P} \Phi_\prm\Bigr)(f)\,.
\end{align}
Equation \eqref{asfp} gives \eqref{lasf} on tensor products of even
polynomials. By the Stone-Weierstrass theorem we get \eqref{lasf} for
all continuous functions.

Let $J_\prm$ be an interval contained in $\bigl[0,1+\nobreak
\nrm\prm\bigr]$. (According to \eqref{ldpbd} it suffices to consider
intervals of this type.)
For a given $\e>0$, there exist continuous functions $c$ and $C$ on
$X_P$ such that $\mu(C-\nobreak c) \leq \e$, and $0\leq c \leq
\ch\leq C$. From
\[ \begin{array}{ccccc}
\mu_t(c) &\leq & \mu_t (\ch) &\leq & \mu_t(C)\\
\downarrow&&&&\downarrow\\
\mu(c)&\leq&\mu(\ch)&\leq &\mu(C)
\end{array}
\]
and $\mu(C)-\mu(c)\leq \e$ we conclude that
\[ \mu(\ch)-2\e \;\leq \; \liminf_t \mu_t(\ch) \;\leq\; \limsup_t
\mu_t(\ch)
\;\leq \; \mu(\ch)+2\e\,.\]
Since $\e$ is arbitrary, then equation \eqref{lasf} holds for $p=\ch$
and the theorem now follows.

 \appendix
\section{Sum formula}\label{sect-sf}

The proof of \eqref{as} in \cite{densII} is based on the sum formula
in \cite{BM9}. Similarly, the proof of Theorem~\ref{thm-af'} in
\S\ref{sect-af} will be based on a generalization,
Theorem~\ref{thm-sf}, of such a sum formula. In this section we
discuss how to adapt and extend the sum formula in \cite{BM9} to our
present requirements, showing how the proof in {\sl loc.\ cit.}\ can
be modified to give Theorem~\ref{thm-sf}. We shall need an estimate
of generalized Kloosterman sums that is discussed
in~\S\ref{sect-Wte}.

\subsection{Kloosterman sums} The sum formula relates Fourier
coefficients of automorphic representations to Kloosterman sums,
which we discuss now.
\subsubsection{Bruhat decomposition} It is well known that
\begin{align}
\SL_2(F) &\= P_F \sqcup \bc_F\qquad(\text{Bruhat
decomposition})\,,\displaybreak[0]\\
\nonumber
P_F&\=\left\{ \matc \ast\ast0\ast \in \SL_2(F) \right\}\,,\qquad
\bc_F\=\left\{ \matc \ast\ast{\neq0}\ast \in \SL_2(F)\right\}\,.
\end{align}

For $\k,\k'\in \Pcal$ we put
\begin{equation}
\hg{\k'}\Ccal^\k \= \left\{ c\in F^\ast\;:\; \exists \g\in \G\;:\;
g_{\k'}^{-1}\g g_\k \= \matc \ast\ast c\ast\right\}\,,
\end{equation}
and we let $\hg{\k'}\Scal^\k(c)$ denote a system of representatives of
\[ \GmN{\k'} \backslash\left(\G \cap g_{\k'} \bc(c) g_\k^{-1}
\right)/\GmN\k\,,\qquad \bc(c) \= \left\{ \matc \ast\ast c\ast
 \right\}\,. \]
Note that $-\hg{\k'}\Ccal^\k = \hg{\k'}\Ccal^\k$, and that we may use
$\matr{-1}00{-1} \cdot
\hg{\k'}\Scal^\k(c) $ as a system of representatives of
$\GmN{\k'}\backslash\left(\G \cap g_{\k'} \bc(-c) g_\k^{-1}
\right)/\GmN\k$.

\subsubsection{Kloosterman sums}For the present situation the
Kloosterman sums are defined, for $\k,\k'\in \Pcal$, $c\in
\hg{\k'}\Ccal^\k$, $r\in \tilde M_\k'$, $r'\in\tilde M_{\k'}'$, by:
\begin{equation}\label{Kldef}
\kls \ch(\k,r;\k',r';c)\= \sum_{\g\in \hg{\k'}\Scal^\k(c)}
\ch(\g)^{-1} e^{2\pi i S(\frac{r'a+rd}c)}\,,
\end{equation}
where $g_{\k'}\g g_{\k}^{-1} = \matc a \ast cd$. For the cusp $\infty$
this simplifies to a more familiar Kloosterman sum:
\begin{align*} \kls\ch(r,r';c) &\isdef \kls\ch(\infty,r;\infty,r';c)
\\
&\= \sum_{a,d\bmod(c)\,, \; ad\equiv 1\bmod c} \ch\matc abcd^{-1}
e^{2\pi i S
(\frac{r'a+rd}c)}\,,
\end{align*}
with $c\in I$, $c\neq0$, and $r,r'\in \Ocal'$.

Since $\bigl( \hg{\k'} \Scal^\k(c) \bigr)^{-1}$ is a system of
representatives for the double quotient space $\GmN\k \backslash
\bigl(\G \cap g_\k \bc(-c)
g_{\k'}^{-1}\bigr)/\GmN{\k'} $, we have
\begin{equation}\label{Sconj}
\overline{ \kls\ch(\k,r;\k',r';c)} \= \kls\ch(\k',r';\k,r;-c) \=
\ch(-1)
\, \kls\ch(\k',r';\k,r;c)\,,
\end{equation}
and we can use $\hg{\k'}\Scal^\k (c)$ as a system of representatives
$\hg{\k\,}\Scal^{\k'}(c)$.

\subsubsection{Weil type estimate}\label{sect-Wte} The proof of the
sum formula in the form that we will need requires a Weil type
estimate
for the Kloosterman sums occurring in the formula. We will also need
this estimate in the use of the sum formula, when we prove the
asymptotic formula in Theorem~\ref{thm-af'}.

\begin{prop}\label{prop-Kl-est}For $\k,\k'\in \Pcal$, $r\in \tilde
M_\k' \setminus\{0\}$, $r'\in \tilde M_{\k'}' \setminus\{0\}$, there
is a finite set $S$ of prime ideals in $\Ocal_F$ such that, for each
$\e>0$ and for all $c \in \hg{\k'}\Ccal^\k$:
\begin{equation}\label{klest}
 \kls \ch(\k,r;\k',r';c)
\;\ll_{F,I,\k,r,\k',r',\e} \prod_{\prm \in S} \nrm\prm^{v_\prm(c)} \;
\left( \prod_{\prm\not\in S}\nrm \prm^{v_\prm(c)}\right)^{\frac12+\e}
\,,\end{equation}
where $v_\prm$ denotes the valuation at the prime~$\prm$.
\end{prop}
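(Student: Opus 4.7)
My approach is the classical route for generalized Kloosterman sums: first rewrite the sum at the pair of cusps $\k,\k'$ as a classical Kloosterman-type sum modulo an integral ideal, then factor that sum by the Chinese Remainder Theorem into local sums over prime powers, and finally combine Weil's bound at the ``good'' primes with the trivial bound at the finitely many ``bad'' primes that get absorbed into~$S$.

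\textbf{Reduction to an ideal modulus.} Starting from \eqref{Kldef} and writing $g_{\k'}^{-1}\g g_\k=\matc abcd$, the element $\g$ is parametrized, modulo the actions of $\GmN{\k'}$ on the left and $\GmN\k$ on the right, by a pair $(a,d)$ with $ad\equiv 1$ modulo an integral ideal $\cid(c)$ determined by $c$ together with the fractional ideals $M_\k$, $M_{\k'}$. Let $S_0$ be the finite set consisting of the primes dividing~$I$, the primes dividing the conductor of $\ch$, the primes where $g_\k$ or $g_{\k'}$ fails to lie in $\SL_2(\Ocal_\prm)$, and the primes occurring in the denominator ideals of $r\in\tilde M_\k'$ and $r'\in\tilde M_{\k'}'$. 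Outside $S_0$ the character $\ch$ becomes trivial on the relevant subgroup, the shifts distinguishing $\tilde M_\k'$ from $M_\k'$ disappear, and the sum acquires the shape of an ordinary Hilbert Kloosterman sum with invertible arguments.

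\textbf{Local factorization and local bounds.} Fix $S\supseteq S_0$ (to be enlarged if needed to absorb a bounded nuisance factor). By the Chinese Remainder Theorem applied to $c\cid(c)$, the global sum splits as a product $\prod_\prm \kls_\prm$ of local sums attached to the primes dividing $c$. For $\prm\in S$ the trivial estimate $|\kls_\prm|\leq \nrm\prm^{v_\prm(c)}$ follows from counting residues. For $\prm\notin S$ the local factor is, after the reduction above, a standard one-variable Kloosterman sum modulo $\prm^{v_\prm(c)}$ with both parameters in $\Ocal_\prm^\ast$; the prime-power Weil bound (Sali\'e's explicit evaluation for $v_\prm(c)\geq 2$, Weil/Deligne for $v_\prm(c)=1$) yields
\[ |\kls_\prm|\leq \tau\bigl(\prm^{v_\prm(c)}\bigr)\,\nrm\prm^{v_\prm(c)/2}\ll_\e \nrm\prm^{v_\prm(c)(1/2+\e)}. \]
Taking the product over $\prm$ gives the estimate \eqref{klest}.

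\textbf{Main obstacle.} The delicate point is to choose $S$ large enough so that at every $\prm\notin S$ the local reduction produces a genuinely nondegenerate Kloosterman sum: the images of $r$ and~$r'$ in the local residue ring must be units, and the character $\ch$ must factor through a quotient on which it is trivial. Tracking how $r$, $r'$, and the conjugations by $g_\k,g_{\k'}$ interact with each local ring requires bookkeeping, but since all input data ($I$, $\ch$, $\k$, $\k'$, $r$, $r'$) are fixed, this only forces finitely many primes into $S$, with the resulting set independent of~$c$. Once this dichotomy is in place, the rest of the argument is the standard combination of Weil estimate and trivial bound outlined above.
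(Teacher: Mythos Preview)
Your proposal is correct and follows essentially the same route as the paper's own argument: establish a product formula over primes, put into a finite set $S$ all primes where the local data (the ideal $I$, the lattices $M_\k$, $M_{\k'}$, the elements $g_\k$, $g_{\k'}$) are non-trivial, apply the trivial count at those primes, and use the classical Weil/Sali\'e prime-power bound at the remaining primes. The paper's sketch phrases the choice of $S$ in terms of the condition $\Ocal_\prm\otimes_\Ocal M_\k\neq\Ocal_\prm$ rather than denominators of $r$ and~$r'$, but these amount to the same bookkeeping, and your identification of the ``main obstacle'' matches exactly what the paper isolates as the only delicate point.
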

This estimate is weaker than what we may expect to be true. See for
instance the estimate stated in (13)
in~\cite{Venk}.

The proof of~\eqref{klest} is relatively easy. First one establishes a
product formula, reducing the task to estimating local Kloosterman
sums. We put all places where something special happens (places
dividing $I$, places where $\Ocal_\prm
\otimes_\Ocal M_\k\neq \Ocal_\prm$, places where $g_\k\not\in
\SL_2(\Ocal_\prm)$, and similarly for $\k'$) into~$S$, and estimate
the corresponding local Kloosterman sum trivially. At the remaining
places the local Kloosterman sum is a standard one and can be
estimated in the usual way.

\subsection{Sum formula} The sum formula in Theorem~\ref{thm-sf}
relates four terms, each depending on a test function. We discuss
first the test functions and the four terms.

\subsubsection{Test functions} The class of test functions is the same
as in \cite{densII}, \S2.1.1: functions of product type $\ph(\nu) =
\prod_j \ph_j(\nu_j)$, where the factor $\ph_j$ is defined on a strip
$|\re\nu_j|\leq \tau$ with $\frac14<\tau<\frac12$, and on the
discrete set $\frac{1+\x_j}2+\NN_0$. The factor $\ph_j$ satisfies on
its domain $\ph_j(\nu_j) \ll (1+\nobreak |\nu_j|)^{-a}$ for some
$a>2$, and is even and holomorphic on the strip $|\re\nu_j|\leq
\tau$.

The $\nu_j$ occurring in this way are related to spectral data. The
eigenvalues $\ld_{\varpi,j}$ of the Casimir operators in $V_\varpi$
can be written as $\ld_{\varpi, j} = \frac14-\nu_{\varpi,j}^2$, which
we can choose in $(0,\infty)\cup i[0,\infty)$. So the test functions
$\ph$ can be viewed as functions defined on a neighborhood of the set
of possible values of the vectors $\nu_\varpi=(\nu_{\varpi,j})_j$.

\subsubsection{Spectral terms} The sum formula is based on the choice
of two pairs $(\k,r)$ and $(\k',r')$ of cuspidal representatives $\k$
and $\k'$ in $\Ccal$, and non-zero Fourier term orders $r$ and $r'$
at those cusps. In this paper, we need only the case that $r/r'$ is
totally positive.

Closely related to the counting function in \eqref{N'} is the cuspidal
term
\begin{equation}
\nctr (\ph) \= \sum_{\varpi} \overline{c^{\k,r}(\varpi)}\,
c^{\k',r'}(\varpi) \, \ph(\nu_\varpi)\,.
\end{equation}

The Fourier coefficients of Eisenstein series enter into the
Eisenstein term
\begin{align}
\ectr(\ph) &\= 2 \sum_{\ld\in \Pcal} c_\ld \sum_{\mu\in \Ld_{\ld,\ch}
} \int_0^\infty \overline{D^{\k,r}_\x(\ld,\ch;iy,i\mu)}
\\
\nonumber
&\qquad \hbox{} \cdot
D^{\k,r}_\x(\ld,\ch;iy+i\mu)\, \ph(iy+i\mu)\, dy\,,
\end{align}
with $c_\ld > 0$, and $\Ld_{\ld,\ch} $ as in \S2.1.2 of~\cite{densII}.
(The $c_\ld$ differ from those in~\cite{densII} due to the difference
in normalization discussed in~\S\ref{sect-Four-coeff}. Their actual
value is not important for the present purpose.)

\subsubsection{Delta term}\label{sss-dt}The delta term
$\Dt^{\k,r;\k',r'}(\ph)$ can be non-zero only if $\k= \k'$ and $r/r'$
satisfies a strong condition.

We put
\begin{align}\label{dt-def}
 \dt_\k(r,r')&\= \dt_\k( \ch ,\x; r,r' )
\displaybreak[0]\\
\nonumber
& \= \frac12 \sum_{\g \in \GmP\k/\GmN\k \,,\; r/r'=\e^2} \ch(\g) \,
e^{-2\pi i S(r\bt\e)} \prod_j (\sign \e_j)^{\x_j}\,,
\end{align}
where $\g=g_\k \matc {\e} {\bt} 0{\e^{-1}} g_\k^{-1}$. If $\k=\infty$
the $\e$ occurring in $\matc \e \bt 0 {\e^{-1}} \in \G$ are the units
of $\Ocal$. For other $\x\in \Ccal$, the $\e$ occurring in
$g_\k \matc {\e} {\bt} 0{\e^{-1}} g_\k^{-1} \in \G$ form also a
subgroup of $F^\ast$ isomorphic to $(\ZZ/2) \times \ZZ^{d-1}$. Only
if $r/r'$ is the square of an element of this subgroup the sum in
\eqref{dt-def} is non-empty, and then consists of two equal summands.

The delta term is
\begin{equation}\label{Dt-def}
\Dt^{\k,r;\k',r'}(\ph) \= \frac{2\vol(\RR^d/M_\k)}{(2\pi)^d} \,
\dt_{\k,\k'}\, \dt_\k(r,r') \, \npl(\ph)\,,
\end{equation}
where $\npl=\otimes_j \npl_{\x_j}$ is the Plancherel measure in
\eqref{pl-def} written in terms of the spectral parameter~$\nu$:
\begin{align}
\int f\, d\npl_0 &\= 2i \int_0^{i\infty} f(\nu) \,\tan\pi \nu\; \nu\,
d\nu \\
\nonumber
&\qquad \hbox{} + \sum_{b\geq 2\,,\; b\equiv 0\bmod 2} (b-1)\; f\left(
\txtfrac{b-1}2\right)\,,
\displaybreak[0]\\
\nonumber
\int f\, d\npl_1 &\= -2i \int_0^{i\infty} f(\nu)\,
\cot\pi\nu\;\nu\,d\nu\\
\nonumber
&\qquad \hbox{} + \sum_{b\geq 3\,,\; b\equiv 1\bmod 2} (b-1) \;
f\left( \txtfrac{b-1}2\right)\,.
\end{align}

\subsubsection{Sum of Kloosterman sums}The Bessel transform
$\B^\sg_\x$ is the same as in (34) of \cite{densII}, with $\sg\in
\{1,-1\}^d$, $\sg_j = \sign(r_j)$. For each test function $\ph$ it
provides a function $\B^\sg \ph$ on $(\RR^\ast)^d$. The Kloosterman
term in the formula is
\begin{equation}\label{K-def}
\K^{\k,r;\k',r'}(\B^\sg \ph) \= \sum_{c\in \hg{\k'}\Ccal^\k} \frac{
\kls\ch(\k',r';\k,r;c)}{|\nrm c|} \; \B^\sg \ph\left(
\txtfrac{4\pi\sqrt{rr'}}c\right)\,.
\end{equation}

We restrict ourselves to stating the formula in the equal sign case
$\sign(r)=\sign(r')$, since this is the case needed in this paper.
With a different Bessel transform, the formula goes through if
$\sign(r)\neq\sign(r')$.

\begin{thm}\label{thm-sf} {\rm (Spectral sum formula) }Let $\k,\k'\in
\Pcal$ and $r\in \tilde M_\k'\setminus \{0\}$, $r'\in \tilde
M_{\k'}'\setminus\{0\}$ such that $\sg=\sign(r) = \sign
(r')$. For any test function $\ph$ the sums and integrals
$\nctr(\ph)$, $\ectr(\ph)$, $\npl(\ph)$ and
$\K^{\k,r;\k',r'}(\B^\sg \ph)$ converge absolutely, and
\begin{align*} &\frac{\nctr(\ph) + \ectr(\ph)}{\vol(\G\backslash G)}
\\
&\qquad \hbox{} \= \frac{2\vol(\RR^d/M_\k)}{(2\pi)^d} \,\dt_{\k,\k'}
\,\dt_\k(r,r')\, \npl(\ph) + \K^{\k,r;\k',r'}(\B^\sg\ph)\,.
\end{align*}
\end{thm}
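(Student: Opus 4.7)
The plan is to adapt the Poincar\'e series argument of \cite{BM9} to the two-cusp situation by forming, for a suitable profile $h$ on $G$, the Poincar\'e series
\[
P^{\k}_r(h;g) \= \sum_{\g\in \GmN\k\backslash \G} \ch(\g)^{-1}\, h\bigl( g_\k^{-1}\g g\bigr),
\]
where $h$ is chosen to transform on the left under $N$ according to $n(x)\mapsto e^{2\pi i S(rx)}$, to carry a prescribed $K$-type, and to decay in the $A$-direction fast enough that $P^\k_r(h;\cdot)\in L^2_\x(\G\backslash G,\ch)$. The strategy is to compute the Fourier coefficient $F_{\k',r'}P^\k_r(h;\cdot)$ in two independent ways and then to match test functions via a Bessel-type inversion.

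The geometric computation uses the Bruhat decomposition of $g_{\k'}^{-1}\G g_\k$. The parabolic double coset, which is non-empty only when $\k=\k'$, contributes the delta term: the sum over $\g\in \GmP\k/\GmN\k$ precisely gives $\dt_\k(r,r')$ as in \eqref{dt-def}, while the normalizing factor $2\vol(\RR^d/M_\k)/(2\pi)^d$ comes from the volume of the Fourier integral and the weight--Plancherel matching on the spectral side. The anti-parabolic double cosets, indexed by $c\in \hg{\k'}\Ccal^\k$, produce the Kloosterman sums $\kls\ch(\k',r';\k,r;c)$ in \eqref{Kldef}, each multiplied by a one-sided integral of $h$ that is, by definition, the Bessel transform $\B^\sg h(4\pi\sqrt{rr'}/c)$. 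This is where the hypothesis $\sign(r)=\sign(r')$ enters: it selects the equal-sign Bessel kernel used in \eqref{K-def}.

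The spectral computation expands $P^\k_r(h;\cdot)$ along the decomposition \eqref{L2decomp}. Integrating $F_{\k',r'}$ term by term and unfolding against the definition of $P^\k_r$ turns each irreducible cuspidal piece $V_\varpi$ into a contribution $\overline{c^{\k,r}(\varpi)}\,c^{\k',r'}(\varpi)$ times a scalar determined by the Casimir eigenvalue $\nu_\varpi$ and the chosen $K$-type of $h$. The Eisenstein continuum likewise produces $\ectr(\ph)$. The volume factor $\vol(\G\backslash G)^{-1}$ is forced by our normalization \eqref{newnorm}. An inversion theorem for the Bessel transform then allows us to prescribe an arbitrary admissible $\ph$ on the spectral side: one chooses the radial profile of $h$ so that the scalar above equals $\ph(\nu_\varpi)$, in which case the integral of $h$ appearing in the Kloosterman term is exactly $\B^\sg\ph$.

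Absolute convergence of $\K^{\k,r;\k',r'}(\B^\sg\ph)$ follows from Proposition~\ref{prop-Kl-est} combined with standard decay of $\B^\sg\ph$ near $0$ and $\infty$, itself a consequence of the holomorphy of $\ph$ in the strip $|\re\nu_j|\leq\tau$ with $\tau>\tfrac14$ and its polynomial decay; convergence of $\nctr$ and $\ectr$ uses Rankin--Selberg bounds together with the Weyl law for $\SL_2(\RR)^d$. I expect the main obstacle to be bookkeeping: tracking the dependence on $(\k,\k')$ in the normalizing constants (in particular the appearance of $\vol(\RR^d/M_\k)$), verifying that the weakened Weil-type estimate \eqref{klest} is still enough to justify all the interchanges of summation and integration for test functions in the declared class, and checking that the delta term in \eqref{dt-def} assembles correctly from both sides when $\k=\k'$ but $r/r'$ happens to be a unit square in the stabilizer group.
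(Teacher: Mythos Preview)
Your proposal is correct and matches the paper's approach: Poincar\'e series, Bruhat decomposition on the geometric side giving the delta term (for $\k=\k'$) and the Kloosterman sums, spectral expansion on the other side, the Weil-type estimate of Proposition~\ref{prop-Kl-est} for convergence of the Kloosterman term, and an extension step to reach the full class of test functions. The only organizational difference is that the paper computes the inner product $\langle P^\k w^{\k,r}_q\eta,\, P^{\k'}w^{\k',r'}_q\eta'\rangle$ of \emph{two} Poincar\'e series rather than the $(\k',r')$-Fourier coefficient of a single one---after unfolding one factor this is the same computation you describe, but it keeps both sides scalar-valued from the outset---and it first proves a restricted formula for the specific family $\th^{r,r'}_q$ before extending by the density arguments of \cite{BM9}, rather than invoking a Bessel inversion theorem directly.
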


\subsection{Proof of the sum formula} We shall go through the proof in
\S3 of \cite{BM9}, indicating where changes are needed to deal with
Fourier coefficients at cusps $\k\neq\infty$.
\subsubsection{Poincar\'e series} Let $\k \in \Pcal$. If the function
$h^\k$ on $G$ satisfies the transformation rule
\[ h^\k(g_\k n(x)g) \= e^{2\pi i S(rx) } h^\k(g_\k g)\]
with $r\in \tilde M'_\k$ and the estimate $h^\k( g_\k n a(y) k ) \ll
\prod_j \min\left( y_j^\al, y_j^{-\bt}\right)$ with $\al>1$ and
$\al+\bt>0$, then
\begin{equation}\label{Pdefk}
P^\k h^\k(g) = \sum_{\g\in \GmN\k \backslash\G} \ch(\g)^{-1} h^\k(\g
g)
\end{equation}
converges absolutely and defines the function $P^\k h^\k$ on $G$ that
is a square integrable, $(\G,\ch)$-automorphic function. In the first
step of the proof of convergence, the sum over the units in Lemma 2.3
of \cite{BM9} is replaced by a sum over $\GmN\k\backslash\GmP\k$. The
method in \S8 of \cite{BM98} works well for this sum. The second step
is a reduction to the convergence of the Eisenstein series, which is
a fact that we can assume. In Lemma 2.4 of \cite{BM9}, in place of
(2.52) and (2.53), we have, as $N(y):=\prod_j y_j\rightarrow\infty$:
\begin{align}
P^\k h^\k(g_\k n a(y)k) &\;\ll_{\al,\bt,\e}\; \max\bigl(
N(y)^{1-\al+\e}, N(y)^{-\bt+\e}\bigr)\,,\displaybreak[0]\\
\nonumber
P^\k h^\k(g_\ld n a(y)k) &\;\ll_{\al,\e}\; N(y)^{1-\al+\e}\,,
\end{align}
for $\ld\in \Pcal\setminus\{\k\}$.

For $h^\k$ and $f$ of weight~$q$, equation
(3.1) in \cite{BM9} takes the form
\begin{equation}
\langle P^\k h^\k,f\rangle\= \frac{\vol( \RR^d/M_\k
)}{\vol(\G\backslash G)} \int_A h^\k(g_\k a) \overline{F_{\k,r}
f(a)}\, |a|^{-1}\, da\,.
\end{equation}

\subsubsection{Fourier coefficient of Poincar\'e series} For $(\k,r)$
and $(\k',r')$ as in Theorem~\ref{thm-sf} and for $h^{\k'}$
satisfying the conditions above:
\begin{align}
F_{\k,r} P^{\k'} h^{\k'}(g) &\= \frac1{\vol(\RR^d/M_\k)} \sum_{\g\in
\GmN{\k'}\backslash \G} \ch(\g)^{-1}\\
\nonumber
&\qquad \cdot \int _{\RR^d/ M_\k} e^{-2\pi iS(rx)} h^{\k'}(\g g_\k
n(x) g)\, dx\,.
\end{align}
We write $g_{\k'}^{-1}\g g_\k=\matc abcd$. Only if $\k=\k'$ there can
be a contribution with $c=0$. With $\g=g_\k \matc \e\bt0{1/\e}
g_\k^{-1}$ as in \S\ref{sss-dt}, we obtain:
\begin{align}\label{fdt}
&\frac1{\vol(\RR^d/M_\k)} \sum_{\g\in \GmN\k \backslash\GmP\k}
\ch(\g)^{-1}\\
\nonumber
&\qquad\qquad\hbox{} \cdot
 \int_{\RR^d/M_\k} e^{-2\pi i S(rx)} h^\k\left( g_\k n(\bt\e+\e^2 x)
h(\e) g\right)\, dx\displaybreak[0]\\
\nonumber
&\quad\= \frac1{\vol(\RR^d/M_\k)} \int_{\RR^d/M_\k} e^{2\pi i
S((r-\e^2r')x)} \, dx \; h^\k\left( g_\k h(\e) g\right)
\displaybreak[0]\\
\nonumber
&\quad\= 2 \overline{\dt_\k(r,r')} \, h^\k(g_\k a(\e^2) g)\,,
\end{align}
where $\e\in \Ocal_F^\ast$ satisfies $\e^2=r/r'$ if there are elements
of the form $\matc \e\bt0{1/\e}$ in $ g_\k^{-1}\G g_\k$.

For all combinations of $\k$ and~$\k'$ the number of terms with
$c\neq0$ is large. We write $g_{\k'}^{-1}\g g_\k = \matc abcd =
n(a/c)
w(c)n(d/c)$, where $w(y)=\matr0{-y^{-1}}y0$, and obtain, in the
notations in~\S\ref{sect-not}:
\begin{align}\label{psk}
&\frac1{\vol(\RR^d/M_\k)} \sum_{c\in \hg{\k'}\Ccal^\k} \sum_{\g\in
\hg{\k'}\Scal^\k(c)} \sum_{\dt\in \GmN\k} \ch(\g\dt)^{-1}\\
\nonumber
&\qquad\qquad\hbox{} \cdot
\int_{\RR^d/M_\k} e^{-2\pi i S(rx)} h^{\k'}\left( \g g_\k n(x)g
\right)\, dx\displaybreak[0]\\
\nonumber
&\quad\= \frac1{\vol(\RR^d/M_\k)}\sum_{c\in \hg{\k'}\Ccal^\k}
\sum_{\g\in \hg{\k'}\Scal^\k(c)} \ch(\g)^{-1}\\
\nonumber
&\qquad\qquad\hbox{} \cdot
 \int_{\RR^d} e^{-2\pi i S(r(x-d/c))+2\pi i S(r'a/c)} h^{\k'}\left(
 g_{\k'} w(c)
n(x)g\right)\, dx\displaybreak[0]\\
\nonumber
&\quad\= \sum_{c\in \hg{\k'} \Ccal^\k}
\frac{\kls\ch(\k,r;\k',r';c)}{\vol(\RR^d/M_\k)} \int_{\RR^d} e^{-2\pi
i S(rx)} h^{\k'}\left(g_{\k'} w(c) n(x)g\right)\, dx\,.
\end{align}

\subsubsection{Whittaker transform}{\it \S3.2} goes through almost
verbatim. (We again indicate references to \cite{BM9} by italics.)
In {\it Definition 3.4}, we put
\begin{equation}
w^{\k,r}_q \eta( g_\k g) \= w_q^r \eta(g)\,.
\end{equation}
We insert $g_\k$ at appropriate places. The last formula in {\it
Theorem~3.8}, implies
\begin{equation}
\int_{\RR^d} e^{-2\pi i S(rx)} w^{\k',r'}_q\eta\left( g_{\k'} w(c)
n(x)g\right)\, dx \= w^{\k,r}_q\tilde \eta(g_\k g)\,.
\end{equation}

\subsubsection{Restricted version of the formula}As in {\it \S3.3},
the scalar product of Poincar\'e series $P^\k w^{\k,r}_q \eta$ and
$P^{\k'}w^{\k',r'}_q \eta'$ is computed in two ways.

In the spectral computation, we obtain the following
\begin{align*} \langle P^\k w^{\k,r}_q \eta, \ps_{\varpi,q}\rangle &\=
8^{d/2}\,\pi^d \, \Bigl( \frac{\vol(\RR^d/M_\k)}{\vol(\G\backslash
G)} \Bigr)^{1/2} \,|\nrm r|^{1/2}\, \overline{c^{\k,r}(\varpi)}\\
&\qquad\hbox{} \cdot
\eta(\nu_\varpi)\, \prod_j \frac{e^{-\pi i
q_j}}{\Gf(\frac12+\bar\nu_{\varpi,j}
+ \frac{q_j\sg_j}2)}\,,
\end{align*}
together with a similar expression for the scalar product with an
Eisenstein series.
(Compare with {\it(3.39)}.)
We have to modify {\it(3.40)} and {\it(3.41)} by inserting the cusps
$\k$ and $\k'$ into the Fourier coefficients. We modify the
definition of the measure by defining $d\s^{\k,r;\k',r'}_{\ch,\x}$ by
the expression in {\it(3.43)} with $\k$ and $\k'$ inserted into the
Fourier coefficients. Then we obtain in place of~{\it(3.46)}:
\begin{equation}
\langle P^\k w^{\k,r}_q\eta, P^{\k'}w^{\k',r'}_q\eta'\rangle \=
\frac{(8\pi^2)^d\, \sqrt{\nrm{rr'}}}{\vol(\G\backslash G)}\,
\int_{Y_\x} \th^{r,r'}_q(\nu)\, d\s^{\k,r;\k',r'}_{\ch,\x}(\nu)\,,
\end{equation}
with
\[ \th^{r,r'}_q(\nu) = \prod_{j=1}^d \frac{ \eta_j(\nu_j)
\overline{\eta'_j(\bar\nu_j})}{\Gf(\frac12-\nu_j+\frac{q_j\sign r_j}2)
\Gf(\frac12-\nu_j+\frac{q_j\sign r_j}2)}\,,\]
and $d\s^{\k,r;\k',r'}_{\ch,\x}$ adapted to the new normalization.

A comparison shows that the cuspidal subspace contributes
\[(8\pi^2)^d\, \sqrt{\nrm{rr'}}\, \vol(\G\backslash G)^{-1}\,
\nctr(\th^{r,r'}_q)\,,\]
with a similarly modified expression for the scalar product with
Eisenstein series.

The geometric computation of the scalar product is carried out as in
{\it\S3.3.4}. The contribution from \eqref{fdt} is equal to
\begin{equation}
2\dt_{\k,\k'} \,\dt_\k(r,r')\, \vol(\RR^d/M_\k)\,
(4\pi)^d |\nrm r| \int_{Y_\x} \th^{r,r'}_q(\nu) \, d\npl(\nu)\,.
\end{equation}
This agrees with {\it(3.50)}, with the substitutions
\begin{equation}
\al(\ch,\x;r,r') \mapsto 2 \dt_{\k,\k'}\dt_\k(r,r')\,,\quad
\sqrt{|D_F|} \mapsto \vol(\RR^d/M_\k)\,.
\end{equation}

The remaining contribution to the scalar product is given
by~\eqref{psk}:
\begin{align}\label{n}
\vol(\RR^d/M_\k) & \int_A w^{\k,r}_q\eta(g_\k a) \cdot \sum_{c\in
\hg{\k'}\Ccal^\k} \vol(\RR^d/M_\k)^{-1}
\overline{\kls\ch(\k,r;\k',r')}\\
\nonumber
&\qquad\hbox{} \cdot
\int_{\RR^d} e^{2\pi i S(rx)} \overline{ w^{\k',r'}_q \eta'\left(
g_{\k'} w(c)
n(x)a\right)}\, dx\; |a|^{-1}\, da \,.
\end{align}
Using \eqref{Sconj} and {\it Theorem~3.8}, we see that the expression
in \eqref{n} is equal to
\[ \sum_{c\in \hg{\k'}\Scal^\k} \kls\ch(\k',r';\k,r) \, \ch(-1) \int_A
w^{\k,r}_q\eta(g_\k a) \, \overline{w^{\k,r}_q\tilde\eta'(g_\k a)}\,
|a|^{-1}\, da\,.\]
The transition $r\mapsto r'$ under the conjugation is present in {\it
Theorem~3.8}. The transition $\k'\mapsto \k$ is a consequence of the
definitions, and has to be checked. With {\it(3.52)} the integral
over~$A$ is given by
\[ \frac{(8\pi^2)^d}{|\nrm c|} \, \nrm{rr'}^{1/2}\, \ch(-1)\, (\B^\sg
\th_q^{r,r'})
\bigl(\txtfrac{4\pi\sqrt{rr'}}c\bigr)\,.\]
This gives the final result of this contribution:
\begin{equation}
(8\pi^2)^d \, \nrm{rr'}^{1/2}\, \K^{\k,r;\k',r'}\bigl( \B^\sg
\th^{r,r'}_q\bigr)\,.
\end{equation}

Division by $(8\pi^2)^d \,\nrm{rr'}^{1/2}$ gives the restricted sum
formula
\begin{align} \label{rsf}
\frac1{\vol(\G\backslash G)}\, \int_{Y_\x} \th(\nu)\,
d\s^{\k,r;\k',r'}_{\ch,\x}(\nu)
&\= \dt_{\k,\k'} \frac{2\vol(\RR^r/M_\k)}{(2\pi)^d} \int_{Y_\x}
\th(\nu)\, d\npl(\nu)\\
\nonumber
&\qquad\hbox{} + \K^{\k,r;\k',r'}(\B^\sg\th)\,.
\end{align}
for all $\th$ indicated in {\it Proposition~3.9}.

At this point we notice a minor error in \cite{BM9}. With the sum of
Kloosterman sums as defined in {\it(3.60)}, it should read
$\K^{r',r}_\ch$ instead of $K^{r,r'}_\ch$ in {\it(3.61)}. The
solution that we will adopt from now on, is 
to take the expression in {\it(3.60)} as the definition of
$\K^{r,r'}_\ch$, like in \eqref{K-def} here.

The proof of the convergence of the sum of Kloosterman sums
{\it(Proposition 3.14)} has to be revisited. In \cite{BM9} the prime
ideals are split according to whether they divide the ideal~$I$ or
not. The finite set $S$ in Proposition~\ref{prop-Kl-est} may tun out
to be a larger set of prime ideals than those dividing~$I$.
Nevertheless, the method in {|it(3.72)}, goes through.

\subsubsection{Extension}The rest of the proof of the formula in
\cite{BM9} is based on the restricted formula in {\it
Proposition~3.9}, and consists of extending the space of test
functions for which the formula holds. The dependence on the cusps
$\k$ and $\k'$ in \eqref{rsf} here, is immaterial in these extension
steps. In {\it\S3.5} we use {\it Proposition 3.16}, which goes
through unchanged. Thus we arrive at the sum formula as stated in
Theorem~\ref{thm-sf}.

\section{Asymptotic formula} \label{sect-af} In the proof of
Proposition~\ref{prop-asfp} we have used the generalization
\eqref{asf} of \eqref{as}, where
$|c^r(\varpi)|^2 = |c^{\infty,r}(\varpi)|^2$ is replaced by
$\overline{ c^{\k,r}(\varpi)}\, c^{\k',r'}(\varpi)$. The aim of this
section is to show that the methods in~\cite{densII} can be extended
to give the asymptotic result~\eqref{asf} below.\medskip

We define for $\k,\k'\in \Pcal$, $r,r'\in \tilde M_\k'\setminus 0$ and
compact sets $\Om \subset \RR^d$
\begin{equation}\label{N'}
\ctr (\Om) \= \sum_{\varpi\,,\; \ld_\varpi\in \Om}
\overline{c^{\k,r}(\varpi)}\, c^{\k',r'}(\varpi)\,,
\end{equation}
where $\varpi$ runs through a maximal orthogonal system of irreducible
subspaces of $L^{2,\mathrm{cusp}}_\x(\G\backslash G;\ch)$. This
generalizes the counting function on the left hand side
of~\eqref{as1}.
\begin{thm}\label{thm-af'}Let $t\mapsto \Om_t$ be a family of bounded
sets in~$\RR^d$ as in \eqref{Om-gen}, or satisfying the conditions
indicated in~\S\ref{sect-cond} below. Let $\k,\k'\in \Pcal$, and let
$r\in \tilde M_\k'\setminus\{0\}$, $r'\in\tilde
M_{\k'}'\setminus\{0\}$, such that $\sign r = \sign r'$. Then, as
$t\rightarrow\infty$
\begin{equation}\label{asf}
\ctr(\Om_t) \= \dt_{\k,\k'} \, \dt_\k(r,r')
\, \frac{2\vol(\RR^d/M_\k)\,\vol(\G\backslash G)}{(2\pi)^d} \,
\pl(\Om_t) + o\bigl( V\lw1(\Om_t)\bigr)\,,
\end{equation}
with the notations in \eqref{Dt-def}, \eqref{pl-def}
and~\eqref{V-def}.
\end{thm}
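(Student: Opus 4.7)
The plan is to derive \eqref{asf} by applying the sum formula of Theorem~\ref{thm-sf} to a suitable family of test functions, following the approach used for \eqref{as} in~\cite{densII}. Throughout, $\k,\k',r,r'$ are fixed while $t\to\infty$, and implied constants in error terms are permitted to depend on this data.

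First I would construct, exactly as in~\cite{densII}, an admissible family of even product-type test functions $\ph_t^\pm$ in the spectral variable $\nu$ that sandwich from above and below the indicator of the preimage of $\Om_t$ under $\ld_j=\frac14-\nu_j^2$, and that satisfy $\npl(\ph_t^\pm)=\pl(\Om_t)+o(V\lw1(\Om_t))$ together with the decay conditions required to serve as test functions in Theorem~\ref{thm-sf}. The corresponding cuspidal spectral sums $\nctr(\ph_t^\pm)$ then sandwich $\ctr(\Om_t)$, since the coefficients $\overline{c^{\k,r}(\varpi)}\,c^{\k',r'}(\varpi)$ need not be nonnegative but can be handled by a standard trick that separates $\ctr$ into linear combinations of positive-coefficient sums (as in~\cite{densII}).

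Applying Theorem~\ref{thm-sf} to $\ph_t^\pm$ yields
\begin{align*}
\nctr(\ph_t^\pm) &= \frac{2\vol(\RR^d/M_\k)\,\vol(\G\backslash G)}{(2\pi)^d}\,\dt_{\k,\k'}\,\dt_\k(r,r')\,\npl(\ph_t^\pm) \\
&\qquad + \vol(\G\backslash G)\,\K^{\k,r;\k',r'}(\B^\sg\ph_t^\pm) - \ectr(\ph_t^\pm).
\end{align*}
The first term is exactly the main term of~\eqref{asf} up to the approximation error $o(V\lw1(\Om_t))$, so it remains to show that the Eisenstein and Kloosterman contributions are also $o(V\lw1(\Om_t))$, followed by a squeeze argument with $\ph_t^+$ and $\ph_t^-$. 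The Eisenstein term is handled by the polynomial bounds on Fourier coefficients of Eisenstein series at arbitrary cusps that are already used in~\cite{densII}; the passage to $\k,\k'\ne\infty$ and $r\ne r'$ requires only notational changes and weakens no estimate.

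The main obstacle is the Kloosterman contribution
$\sum_{c\in\hg{\k'}\Ccal^\k}|\nrm c|^{-1}\,\kls\ch(\k',r';\k,r;c)\,\B^\sg\ph_t^\pm\!\bigl(\tfrac{4\pi\sqrt{rr'}}{c}\bigr)$,
which I would estimate by combining the Weil-type bound of Proposition~\ref{prop-Kl-est} with the decay of $\B^\sg\ph_t^\pm$ developed in~\cite{BM9, densII}. The Bessel transform depends on the cusp data only through $|rr'|$, which is fixed, so its decay bounds on $(\RR^\ast)^d$ transfer verbatim from the $\k=\k'=\infty$, $r=r'$ case treated in~\cite{densII}; meanwhile, Proposition~\ref{prop-Kl-est} loses at most a factor $\prod_{\prm\in S}\nrm\prm^{v_\prm(c)/2}$ compared with a pure square-root bound, where $S$ is a finite set of exceptional primes depending only on $(\k,\k',r,r')$, and this loss is absorbed into the implied constant. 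Summing against the Bessel decay then yields $o(V\lw1(\Om_t))$ by the same convergence argument used in~\cite{densII}, completing the proof.
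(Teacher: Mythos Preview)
Your proposal is correct and follows essentially the same route as the paper's proof in \S\ref{sect-das}: apply the sum formula of Theorem~\ref{thm-sf} to the test functions of~\cite{densII}, obtain the delta term as the main term, and show that the Kloosterman and Eisenstein contributions are $o(V\lw1(\Om_t))$ via Proposition~\ref{prop-Kl-est} and the Eisenstein coefficient bounds respectively. One small correction: the Eisenstein bounds at cusps other than~$\infty$ are \emph{not} already in~\cite{densII}; the paper establishes the analogue~\eqref{Eis-est} for $D^{\k,r}_\x$ separately, by conjugating by~$g_\k$ to reduce to Eisenstein series on a commensurable group containing a principal congruence subgroup---so the passage to general~$\k$ is more than purely notational there, though the argument is routine.
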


The restriction to $\sign r=\sign r'$ is necessary to us. We have not
been able to estimate Bessel transforms suitably in the unequal sign
case.

\subsection{Conditions}\label{sect-cond}In \S\ref{sect-das} we will
show that Theorem~\ref{thm-af'} is valid for families
$t\mapsto \Om_t$ as used in~\cite{densII}. Such families have product
form
\[ \Om_t \= \hat C_t^+ \times \hat C_t^- \times \prod_{j\in E}
[A_j,B_j] \,,\]
based on a partition $\{1,\ldots,d\}= Q^+ \sqcup Q^- \sqcup E$ of the
archimedean places of~$F$. The bounded intervals $[A_j,B_j]$ do not
depend on~$t$, and the endpoints should not be of the form $\frac
b2\bigl(1-\nobreak\frac b2\bigr)$ with $b\equiv \x_j \bmod 2$, $b>1$.
The sets $\hat C_t^+$ and $\hat C_t^-$ are compact sets contained in
$\prod_{j\in Q^+} \bigl[\frac54,\infty\bigr)$, respectively
$\prod_{j\in Q^-}
(\infty,0]$, such that the corresponding sets $C_t^\pm = \prod_{j\in
Q^\pm} \bigl( i[1,\infty)
\cup [0,\infty)
\bigr)$ in the variable $\nu$ with $\ld=\frac14-\nu^2$ satisfy the
conditions in (97) or (102) in~\cite{densII}. The proof of
Theorem~\ref{thm-afH} is based on the asymptotic formula, so the
statement of the theorem holds for all families $t\mapsto \Om_t$
satisfying these conditions.

A family $t\mapsto \Om_t$ as in~\eqref{Om-gen} in the introduction
does not satisfy these conditions directly. We write
$\Om_t = \bigsqcup_p \Om^{(p)}_t$ with
\[ \Om^{(p)}_t \= \bigl[\txtfrac 54,t\bigr]^{Q^+} \times
\bigl[-t,-\txtfrac12\bigr] \times \bigl[-\txtfrac12,\txtfrac
54\bigr]\,,\]
and let $p=(Q^+,Q^-,Q^0)$ run over the partitions $Q=Q^+\sqcup Q^-
\sqcup Q^0$ such that $Q^0\neq Q$. All sets $\Om_t^{(p)}$ satisfy the
conditions in~\cite{densII}. Summing the asymptotic formulas
\eqref{asf} applied to each of the families $t\mapsto \Om_t^{(p)}$
gives it for~$t\mapsto \Om_t$.

There are more families for which the asymptotic formula can be
proved, by expressing them in families satisfying the conditions in
(97) or (102)
in~\cite{densII}. Hence the statement of Theorem~\ref{thm-afH} holds
for all these families, in particular for the families in \S1.2.4--13
and \S6 in~\cite{densII}.

\subsection{Estimates of Fourier coefficients of Eisenstein series}In
\cite{densII}, (32),
(33), we quoted from \cite{BMP3a} and~\cite{BM9} estimates of
 $D^{\infty,r}_\x(\ld,\ch;iy,i\mu)$. These estimates have to be
 generalized to $D^{\k,r}_\x(\ld,\ch;iy,i\mu)$.

The estimations in \S5.1--2 of \cite{BMP3a} are based on the fact that
Eisenstein series for $\G_0(I)$ are linear combinations of Eisenstein
series for the subgroup $\G(I)$. In \S4.2 of \cite{BM9} it is shown
that this result extends to the situation with weights in $\ZZ$
instead of $2\ZZ$. The character $\ch$ is trivial on $\G(I)$, hence
it only influences the coefficients in the linear combination.

These estimations concern the Fourier coefficients $D^{\infty,r}_\x$.
Let $\k\in \Pcal$ be another cusp. The function $\tilde E_q:g \mapsto
E_q(\ld,\ch;\nu,i\mu;g_\k g)$ is an Eisenstein series on the group
$\G_1=g_\k^{-1} \G_0(I) g_\k$ for a character $\ch_1$ determined by
conjugation. Actually, depending on the normalizations, it might be a
multiple of an Eisenstein series on~$\G_1$, with a factor in which
the influence of~$\nu$ and $\mu$ is of the form $t^{\nu} t_1^{i\mu}$
with $t>0$, $t_1>0$. So this factor is unimportant for estimates.
Since $g_\k\in \SL_2(F)$, the group $\G_1$ is commensurable with
$\G_0(I)$, and contains a principal congruence subgroup $\G(I_1)$,
with $I_1 \subset I$. The character $\ch$ is trivial on $\G(I)$, so
we can arrange $I_1$ such that $\ch_1$ is trivial on $\G(I_1)$. Thus,
for the Fourier coefficients of $\tilde E_q$ at $\infty$ with nonzero
order, we have an estimate like in Proposition~4.2 in~\cite{BM9}. The
Fourier coefficients of $E_q(\ld,\ch;\nu,i\mu)$ at $\k$ of nonzero
order can be expressed in those of~$\tilde E_q$ at~$\infty$. The
consequence is that $D^{\k,r}_\x(\ld,\ch;\nu,i\mu)$ satisfies an
estimate like that in Proposition~4.2 of~\cite{BM9}. As in (33) of
\cite{densII}, we have:
\begin{equation}\label{Eis-est}
D^{\k,r}_\x(\ld,\ch;iy,i\mu)
\ll_{F,I,\k,r,\k',r'}\biggl( \log\bigl(2+\sum_j |y+\mu_j| \bigr)
\biggr)^7\,.
\end{equation}

\subsection{Derivation of the asymptotic formula}\label{sect-das}
The method of proof is the same as in \S2-5 in \cite{densII}, so we
shall only indicate the points where the argument departs from the
approach therein.

In the proofs in~\cite{densII} we mostly use the spectral parameter
$\nu$ instead of the eigenvalue $\ld=\frac14-\nu^2$. In terms of the
spectral parameter the counting function in \eqref{N'} has the form
\begin{equation}
\nctr (\tilde\Om) \;=\; \sum_{\varpi\,,\; \nu_\varpi\in \tilde\Om}
\overline{c^{\k,r}(\varpi)}\, c^{\k',r'}(\varpi)\,.
\end{equation}

We choose the test functions in the same way as in Lemma 2.2
in~\cite{densII}. The considerations in \S2.2.1 and \S2.2.4, {\sl
loc. cit.}, do not depend on the Fourier term order, and go through
unchanged. We use the Weil type estimate of Kloosterman sums in
Proposition~\ref{prop-Kl-est}, and, in \S2.2.2, {\sl loc. cit.}, we
replace the distinction between $\prm\divides I$ and $\prm\nmid I$,
by the distinction, $\prm\in S$ and $\prm\not\in S$, where $S$ is a
finite set of primes as in Proposition~\ref{prop-Kl-est}.

In \S2.2.3 of \cite{densII} we use the estimate \eqref{Eis-est} of
Fourier coefficients of Eisenstein series. The statement of
Proposition 2.4 in \cite{densII} goes through for the counting
quantity $\tilde N^{\k,r;\k',r'}(\ph(q,\cdot))$. The factor
$\sqrt{|D_F|}$ is a specialization of $\vol(\RR^d/M_\k)$, and a
factor $\dt_{\k,\k'} \, \dt_\k(r,r')$ is inserted. Hence the delta
term is present only if the cusps $\k$ and $\k'$ are equal, and if
also $r/r'$ is the square of a generalized unit.

The remaining proofs in \cite{densII} are based on the estimate in
Proposition~2.4 of~\cite{densII}, and hence go through for Fourier
coefficients at different cusps.


\newcommand\bibit[4]{
\bibitem {#1}#2: {\em #3;\/ } #4}


\end{document}